\documentclass[12pt]{article}
\usepackage{amssymb,amsmath,amsthm,amscd,cite,hyperref, pgfplots, authblk, fancyhdr}
\usepackage[letterpaper, margin=1in]{geometry}

\title{Heat Kernel Estimates for Random Walks with Killing on Transient Graphs}

\author[1]{Emily Dautenhahn}
\author[2]{Anthony Graves-McCleary}

\affil[1]{Department of Mathematics, Gettysburg College, Gettysburg, PA, 17325 edautenh@gettysburg.edu}
\affil[2]{Department of Mathematics and Statistics, University of Alaska Fairbanks, Fairbanks, AK, 99775 ajgravesmccleary@alaska.edu}
\date{}

\newcommand{\set}[1]{\left\{#1\right\}}
\newcommand{\Real}{\mathbf R}

\newcommand{\N}{\mathbf{N}}

\newcommand{\Z}{\mathbf{Z}}

\newcommand{\abs}[1]{\left\vert#1\right\vert}
\newcommand{\norm}[1]{\left\Vert#1\right\Vert}

\newcommand{\fct}[3]{#1 \colon #2 \rightarrow #3}

\renewcommand{\phi}{\varphi}

\newcommand{\brac}[1]{\left< #1 \right>}

\newtheorem{theorem}{Theorem}[section]
\newtheorem{proposition}[theorem]{Proposition}
\newtheorem{lemma}[theorem]{Lemma}
\newtheorem{definition}[theorem]{Definition}
\newtheorem{example}[theorem]{Example}

\numberwithin{equation}{section}

\begin{document}
\maketitle

\begin{abstract}
We give matching two-sided estimates for heat kernels obtained from Schr\"odinger operators $\Delta+W$ on weighted graphs. The graphs we consider are transient and admit two-sided sub-Gaussian heat kernel estimates for the Laplacian $\Delta$, and the potentials $W$ decay to zero at infinity. We offer two proofs for one of our main results: one using the Feynman-Kac formula, and one without using it. Our results apply to fractal-type graphs such as graphical Sierpinski carpets, among other examples.\\

\noindent \textbf{Running Title:} Heat Kernel Estimates on Transient Graphs\\

\noindent\textbf{Keywords:} heat kernels; random walks; Schr\"odinger operators; $h$-transform\\

\noindent\textbf{Subject Classification:} 05C81 (Primary) 60K50, 60G50, 35K08 (Secondary)

\end{abstract}

\section{Introduction}\label{intro}

We are interested in estimating the heat kernels obtained from random walks with killing. This heat kernel may alternatively be interpreted as being obtained from a Schr\"odinger operator, i.e. a Laplacian plus a potential term $\Delta+W$.\\

The topic of heat kernel estimates on graphs has received extensive research interest; see the books by Barlow \cite{barlow1}, Grigor'yan \cite{grigoryan1}, Spitzer \cite{spitzer1}, Lawler and Limic \cite{lawlerlimic1}, and Woess \cite{woess1} for an introduction to this topic. Additionally, mathematical physicists have studied the spectra of Schr\"odinger operators $\Delta+W$ on graphs whose potentials $W$ are i.i.d. collections of random variables, in the context of quantum dynamics. See Aizenman and Warzel \cite{aizenmanwarzel1} for an overview of this area. Heat kernels of Schr\"odinger operators on other spaces such as Euclidean spaces and Riemannian manifolds has also received much attention; see for example Grigor'yan \cite{grigoryan2} and Takeda \cite{takeda1}.\\

 To our knowledge, the case of heat kernels of Schr\"odinger operators $\Delta+W$ on graphs whose potentials $W$ are deterministic remains relatively unexplored. Ancona \cite{ancona1} studied Green's functions of elliptic operators on graphs and obtained conditions for comparability between Green's functions. Our results have statements that are closely related to this, but our proofs are different with more general hypotheses on the underlying graph. Our main result of this paper is the following theorem. See Definitions \ref{hkbeta}, \ref{graphpotentialdef}, \ref{katoinfinity}, and \ref{subcriticaldef} for precise statements of the hypotheses.

\begin{theorem}[Heat kernel estimates]\label{heatkerneltheorem}

Let $(V, \mu)$ be a transient weighted graph satisfying sub-Gaussian heat kernel estimates HK($\beta$). Let $W$ be a graph potential in the Kato class $K^\infty(V, \mu)$. If $\Delta_\mu+W$ is subcritical, then there exist constants $c_1, c_2, c_3, c_4>0$ such that for all $x,y\in V$ and all $n\geq d(x,y)$, $$\frac{c_1}{V_\mu(x,n^{1/\beta})}\exp\left(-c_2\left(\frac{d(x,y)^\beta}{n}\right)^{1/(\beta-1)}\right)\leq p_\mu^W(x,y,n)\leq \frac{c_3}{V_\mu(x,n^{1/\beta})}\exp\left(-c_4\left(\frac{d(x,y)^\beta}{n}\right)^{1/(\beta-1)}\right).$$ 
    
\end{theorem}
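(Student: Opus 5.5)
The upper bound is immediate and the lower bound is the real work. Since $W\geq 0$ (a graph potential), the killed walk is dominated by the unkilled one, so $p_\mu^W(x,y,n)\leq p_\mu(x,y,n)$. This follows either from the Feynman--Kac representation
$$p_\mu^W(x,y,n)=\mathbb{E}_x\Bigl[\prod_{k=0}^{n-1}(1-W(X_k))\,\mathbf 1_{\{X_n=y\}}\Bigr]/\mu(y)$$
or directly from entrywise comparison of the transition operators. HK($\beta$) for $p_\mu$ then gives the upper bound with the constants of HK($\beta$). For the lower bound I would use an $h$-transform.

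\textbf{Step 1: construct $h$.} Subcriticality of $\Delta_\mu+W$ together with $W\in K^\infty(V,\mu)$ should yield a positive $(\Delta_\mu+W)$-harmonic function $h$ that is bounded above and below by positive constants. I would take $h=1-G^W W$, or $h(x)=\mathbb{P}_x(\text{never killed})$. Equivalently, $h=\lim_{n\to\infty}\sum_y p_\mu^W(x,y,n)\mu(y)$, the survival probability. The upper bound $h\leq 1$ is trivial. The lower bound $\inf h>0$ is the key point: by the Kato-at-infinity condition, $\sup_x \sum_{y\notin B} g(x,y)W(y)\mu(y)$ is small outside a large finite set $B$, and this is where transience is used, since the Green function $g$ must be finite. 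Subcriticality then controls the finite part, keeping $h$ strictly positive everywhere. A maximum-principle or Harnack argument on the finite set $B$, combined with the smallness at infinity (Khas'minskii's lemma), should give $h\geq c>0$.

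\textbf{Step 2: transform and transfer.} Define the Doob transform $q(x,y,n)=p_\mu^W(x,y,n)/(h(x)h(y))$, which is the heat kernel of a Markov chain reversible with respect to the measure $h^2\mu$ on the same graph. Since $c\leq h\leq 1$, the measure $h^2\mu$ is comparable to $\mu$, so volume doubling and the volume estimates transfer. The associated Dirichlet form is comparable to the original one because the conductances become $\mu_{xy}h(x)h(y)$, and the conductance at $x$ absorbs the killing. HK($\beta$) is stable under such bounded perturbations: it is equivalent to volume doubling plus an elliptic Harnack inequality plus resistance or exit-time estimates of order $r^\beta$ (Barlow--Coulhon--Kumagai, Grigor'yan--Telcs). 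Checking these properties for the transformed chain therefore gives HK($\beta$) for $q$. Multiplying back by $h(x)h(y)\geq c^2$ yields the lower bound for $p_\mu^W$.

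\textbf{Main obstacle.} I expect two difficulties. The first is the uniform lower bound $\inf h>0$, which is exactly where subcriticality and the Kato class interact. The second is verifying the stability hypotheses for the transformed chain. That chain is sub-Markovian in appearance, since $\sum_y P(x,y)h(y)/h(x)=1$ requires harmonicity of $h$, so it may be lazy at different rates. For the stability step I would try first to show that the transformed Dirichlet form is comparable to $\mathcal{E}_\mu$ up to an $L^2$ term that Kato smallness makes negligible.
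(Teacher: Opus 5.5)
Your argument rests on $W\ge 0$, which is not a hypothesis. Definition~\ref{graphpotentialdef} only imposes the upper bound $W(x)\le K_\mu(x,x)-\epsilon$, so that $K^W_\mu(x,x)\ge\epsilon$ and the $h$-transformed graph stays uniformly lazy. The paper explicitly treats signed potentials $W=W_+-W_-$ in Lemma~\ref{compactstep2}. A telltale sign: for $W\ge 0$ the subcriticality hypothesis would be vacuous, since $0\le K^W_{\mu,n}\le K_{\mu,n}$ entrywise gives $G^W_\mu\le G_\mu<\infty$ on a transient graph. When $W_-\neq 0$, the row sums $\sum_y K^W_\mu(x,y)=1-W(x)$ exceed $1$ wherever $W<0$. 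Then there is no killed walk and no survival probability, and the comparison $p^W_\mu\le p_\mu$ fails (it reverses when $W\le 0$). So your upper bound is not immediate: it needs $\sup h<\infty$, just as the lower bound needs $\inf h>0$. (Even for $W\ge 0$, the weight $\prod_k(1-W(X_k))$ in your Feynman--Kac formula corresponds to the kernel $(1-W(x))K_\mu(x,y)$, not to $K^W_\mu$, which removes $W(x)$ only from the holding probability. The entrywise comparison is the correct justification.)

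The missing step is therefore to produce a profile $h\asymp 1$ from subcriticality when $W$ has a negative part. Any such argument must use subcriticality essentially: if $h\asymp 1$, then $\nu=h^2\mu\asymp\mu$ is transient, and $G^W_\mu(x,y)=h(x)h(y)G_\nu(x,y)<\infty$ by Lemma~\ref{classicalh}. Khas'minskii-type smallness (the Neumann series of Lemma~\ref{smalls}, which works for signed $W$) handles the Kato tail. It does not handle a finitely supported negative part such as $-\lambda 1_{\set{o}}$ with $\lambda$ near the largest value for which $\Delta_\mu-\lambda 1_{\set{o}}$ is subcritical. That part is not small, and no maximum-principle argument on $B$ controls it.

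The paper proceeds as follows:
\begin{enumerate}
\item Write $W=W_1+W_2$ with $\norm{W_1}_\mu<\frac12$ and $W_2$ finitely supported, and remove $W_1$ by an $h$-transform.
\item Remove $W_{2,+}$, and afterwards $-W_{2,-}$ on the transformed graph, by the continuity argument of Proposition~\ref{compactstep} along $t\mapsto\Delta+tW$. Each small step is Lemma~\ref{smalls} on the current transformed graph, and subcriticality on all of $[0,1]$ follows from $-tW_-\ge -W_-$.
\item Use uniqueness of profiles up to constants (Lemma~\ref{unique}, from the Liouville property implied by HK($\beta$)) to propagate the bound $h\asymp 1$.
\end{enumerate}
The paper's second proof instead invokes Chen's gauge theorem.

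Your Step~2 coincides with the paper's final step and is easier than you fear. Since $h$ is a profile, $K_\nu$ is genuinely Markov, and $K_\nu(x,x)=K_\mu(x,x)-W(x)\ge\epsilon$ is precisely the graph-potential condition. With $h\asymp 1$, all weights of $\nu=h^2\mu$, holding weights included, are comparable to those of $\mu$. Hence Theorem~\ref{bassbarlowthm} applies directly with no $L^2$ correction, and $V_\nu\asymp V_\mu$ returns the estimate to $V_\mu$.
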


The proof of this theorem hinges on our Theorem \ref{maintheorem}, which we prove using the iterated $h$-transform. We also offer an alternate proof of Theorem \ref{maintheorem} using results of Chen \cite{chen1} in Section 6.\\

Among the more general graphs to which our results apply are fractal graphs such as graphical Sierpinski carpets. Random walks without killing have been studied on such graphs by Barlow and Bass \cite{barlowbass2} and Grigor'yan and Telcs \cite{grigoryantelcs1}. These graphs and their random walks can be used in a limiting process to construct Brownian motion on fractals; see Kigami \cite{kigami1}, Kusuoka and Zhou \cite{kusuokazhou1} as well as Barlow, Bass, Kumagai and Teplyaev \cite{barlowbass3}. For more general results concerning fractals, see Barlow \cite{barlow3}, Graves-McCleary and Saloff-Coste \cite{gravesmccleary2}, and Grigor'yan and Hu \cite{grigoryanhu}.

\section{Graph Preliminaries}

\begin{definition}
A \textbf{weighted graph} is a pair $(V, \mu)$ where $V$ is a set and $\fct{\mu}{V\cup V^2}{[0, \infty)}$ is a function such that $\mu(x)>0$ for all $x\in V$, $\mu(x,y)=\mu(y,x)$ for all pairs $(x,y)\in V^2$ and for each $x\in V$, $\sum_{y\in V} \mu(x,y)= \mu(x).$
\end{definition}

In practice we let $\mu_{xy}$ denote $\mu(x,y)$. Note that we use $\mu$ to denote both edge weights and vertex weights. This will simplify our discussion of the $h$-transform in Section \ref{htrans}.  Elements $x\in V$ are called \textbf{vertices}, and if $\mu_{xy}>0$ we say that $x$ and $y$ are connected by an \textbf{edge}, denoted $x\simeq y$. If $x\simeq y$ and $x\neq y$ we write $x\sim y$.\\

Additionally, since $\sum_{y\in V}\mu_{xy}=\mu(x)$, one can elide defining $\mu_{xx}$ and define it later by setting $\mu_{xx}=\mu(x)-\sum_{y\sim x}\mu_{xy}$. We will do this for convenience when defining $h$-transformed weights later.

\begin{definition}
Let $\Omega\subseteq V$.

\noindent (i) A \textbf{path} in $\Omega$ is a sequence $x_0, \dots, x_N\in \Omega$ with $x_j\sim x_{j+1}$ for $j=0, \dots, N-1$. Given such a path, we define $N$ to be its \textbf{length}.\\

\noindent (ii) $\Omega$ is \textbf{connected} if for any $x,y\in \Omega$ there is a path $x_0, \dots, x_N$ with $x_0=x$ and $x_N=y$.\\

\noindent (iii) The (exterior) \textbf{boundary} of $\Omega$ is the set $$\partial \Omega=\set{x\notin \Omega\colon x\sim y \textrm{ for some }y\in \Omega}.$$
\end{definition}

\begin{definition}
Let $(V, \mu)$ be a weighted graph. The \textbf{distance} $d(x,y)$ between $x,y\in V$ is the length of the shortest path from $x$ to $y$. If there is no such path we say $d(x,y)=+\infty$.
\end{definition}

If $(V, \mu)$ is connected, $d$ is a metric on $V$ and $d(x,y)=1$ if and only if $x\sim y$.\\

We now introduce probability by defining the Markov kernel of a weighted graph.

\begin{definition}
Let $(V, \mu)$ be a weighted graph. The \textbf{Markov kernel} of $(V, \mu)$ is the function \begin{equation}K_\mu(x,y)=\frac{\mu_{xy}}{\mu(x)}.\end{equation} Given $\fct{f}{V}{\Real}$ we define \begin{equation}(K_\mu f)(x)=\sum_{y\in V} K_\mu(x,y)f(y).\end{equation}
\end{definition}

Therefore the Markov kernel can be viewed as an operator on functions on $V$. Since $\sum_{y\simeq x} \mu_{xy}=\mu(x)$ we have that $K_\mu 1=1$.

\begin{definition}
We say that the weighted graph $(V, \mu)$ has \textbf{controlled weights} if there exists $C>1$ such that for all $x\in V$ and $x\sim y$ we have \begin{equation} \frac{\mu_{xy}}{\mu(x)}\geq \frac{1}{C}.
\end{equation}
\end{definition}

\begin{definition}
We call the weighted graph $(V, \mu)$ \textbf{uniformly lazy} if there exists $c>0$ with $K_\mu(x,x)\geq c$ for all $x\in V$.
\end{definition}

In this paper: \textbf{we assume all our graphs are infinite, connected, have controlled weights and are uniformly lazy.} The phrase ``weighted graph" will be taken to mean ``uniformly lazy weighted infinite connected graph with controlled weights."\\

Next, define by recursion on $n\in \N$: for $n=0$, $K_{\mu, 0}(x,y)=1_{\set{x}}(y)$, and for $n\geq 1$, \begin{equation}K_{\mu,n+1}(x,y)=\sum_{z\in V} K_{\mu,n}(x,z)K_\mu(z,y).\end{equation} Note that this yields $K_{\mu, 1}=K_\mu$.

\begin{definition}
Let $(V, \mu)$ be a weighted graph. The \textbf{heat kernel} of $(V, \mu)$ is given for $x,y\in V$ and $n\in \N$ as \begin{equation}p_\mu(x,y,n)=\frac{K_{n,\mu}(x,y)}{\mu(y)}.\end{equation}
\end{definition}

Unlike the Markov kernel $K_{n,\mu},$ the heat kernel is always symmetric in $x$ and $y$.


\begin{definition}
Let $(V, \mu)$ be a weighted graph. The \textbf{Laplacian} of $(V, \mu)$, denoted $\Delta_\mu$, is the operator $$\Delta_\mu=I-K_\mu$$ where $I$ is the identity.
\end{definition}

The heat kernel satisfies the discrete heat equation
\[ \Delta_\mu p_\mu(x,y,n) = p_\mu(x,y,n) - p_\mu(x,y,n+1).\]

\begin{definition}
The \textbf{Green's function} of the weighted graph $(V,\mu)$ is defined by
\[ G_{\mu}(x,y) = \sum_{n=0}^\infty p_\mu(x,y,n).\]
    
\end{definition}

\begin{definition}
The weighted graph $(V,\mu)$ is said to be \textbf{transient} if $G_\mu(x,y) < +\infty$ for all $(x,y) \in V \times V$ (in particular, this includes all pairs $(x,x)$). Otherwise, $(V, \mu)$ is said to be \textbf{recurrent}. 
\end{definition}

There are several equivalent definitions of transience and recurrence; see for instance Theorem 1.8 of \cite{barlow1}. It is well-known that $\Z^d$ is recurrent if $d=1$ or $2$ and transient if $d \geq 3$.\\

Note that on a transient graph $(V, \mu)$, we have that if $y\in V$ is fixed and $u(x)=G_\mu(x,y)$, then $\Delta_\mu u=p_\mu(x,y,0)=\frac{1_{\set{y}}(x)}{\mu(y)}.$

\subsection{Notation}

We let $a\preceq b$ denote the existence of a constant $C>0$ such that $a\leq C\cdot b$. We let $a\asymp b$ denote that both $a\preceq b$ and $a\succeq b$ hold. Exact constants may sometimes change line to line. Given a number $a\in \Real$ we let $a_+=\max(a, 0)$ and $a_-=\max(-a, 0)$. Given a real valued function $f$ we let $f_+=\max(f, 0)$ and $f_-=\max(-f, 0)$ so that $f_+, f_-\geq 0$ and $f=f_+-f_-$. Given a weighted graph $(V, \mu)$ we let $$C(V)=\set{\fct{f}{V}{\Real}}$$ $$C_c(V)=\set{f\in C(V)\colon f\textrm{ has finite support}}$$ $$C_0(V)=\set{f\in C(V)\colon f\textrm{ vanishes at infinity}}.$$ 

On a weighted graph $(V, \mu)$, we let $B(x,r)$ denote the closed ball of center $x\in M$ and radius $r>0$. We let $V_\mu(x,r)=\mu(B(x,r))=\sum_{y\in B(x,r)}\mu(y)$. The distance on a weighted graph $(V, \mu)$ between $x$ and $y$ will always be denoted $d(x,y)$, and given a distinguished point $o\in V$ we let $\brac{x}=1+d(x,o)$ for $x\in V$.

\section{Heat Kernel Concepts}

\begin{definition}\label{hkbeta}
    Let $\beta\geq 2$. The weighted graph $(V, \mu)$ satisfies \textbf{sub-Gaussian heat kernel estimates with exponent $\beta$}, abbreviated \textbf{HK($\beta$)}, provided there exist constants $c_1, c_2, c_3, c_4>0$ such that for all $x,y\in V$ and all $n\geq d(x,y)$, $$\frac{c_1}{V_\mu(x,n^{1/\beta})}\exp\left(-c_2\left(\frac{d(x,y)^\beta}{n}\right)^{1/(\beta-1)}\right)\leq p_\mu(x,y,n)\leq \frac{c_3}{V_\mu(x,n^{1/\beta})}\exp\left(-c_4\left(\frac{d(x,y)^\beta}{n}\right)^{1/(\beta-1)}\right).$$ 
\end{definition}

The exponent $\beta$ is sometimes called the \textbf{walk dimension} or \textbf{escape exponent} of $(V, \mu)$. Delmotte \cite{delmotte1} proved that HK(2) is equivalent to the conjunction of the volume doubling property and the Poincar\'e inequality; see Delmotte's paper for more details and the definitions. Barlow and Bass \cite{barlowbass1} proved that HK($\beta$) is equivalent to volume doubling, a Poincar\'e inequality with exponent $\beta$, and a cutoff Sobolev inequality. This implies the following:

\begin{theorem}\label{bassbarlowthm} (Barlow-Bass \cite{barlowbass1}, Theorem 1.1)
    Let $(V, \mu)$ be a weighted graph and let $\nu$ be such that $\nu\asymp \mu$. Then $(V, \mu)$ satisfies HK($\beta$) if and only if $(V, \nu)$ does.
\end{theorem}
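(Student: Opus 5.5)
The plan is to use the Barlow--Bass characterization, quoted just before this statement: HK($\beta$) is equivalent to the conjunction of three conditions on the weighted graph. These are volume doubling, a Poincar\'e inequality with exponent $\beta$, and a cutoff Sobolev inequality. So it suffices to show that each of these three conditions is stable under replacing $\mu$ by a comparable weight $\nu\asymp\mu$, on both vertices and edges. By symmetry of the relation $\asymp$, we only need to prove one implication, say from $(V,\mu)$ to $(V,\nu)$.

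First, check the standing assumptions and the geometry. Since $\nu_{xy}\asymp\mu_{xy}$, the edge sets coincide, so the graph distance $d$, the balls $B(x,r)$, and connectedness are unchanged. Controlled weights transfer because $\nu_{xy}/\nu(x)\asymp \mu_{xy}/\mu(x)$. Uniform laziness transfers in the same way, since $\nu_{xx}\asymp\mu_{xx}$. Volume doubling transfers because $V_\nu(x,r)\asymp V_\mu(x,r)$ for every ball, with the same comparability constants.

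Second, handle the functional inequalities. The energy $\mathcal E_\mu(f,f)=\tfrac12\sum_{x,y}\mu_{xy}(f(x)-f(y))^2$ satisfies $\mathcal E_\nu\asymp\mathcal E_\mu$ termwise. For a ball $B$, the Poincar\'e inequality reads $\sum_{B}|f-f_B|^2\mu\le C r^\beta \mathcal E_{B^*}(f,f)$. Here we use that $f_{B,\nu}$ minimizes $c\mapsto\sum_B|f-c|^2\nu$. This gives $\sum_B |f-f_{B,\nu}|^2\nu\le\sum_B|f-f_{B,\mu}|^2\nu\asymp\sum_B|f-f_{B,\mu}|^2\mu$, so the Poincar\'e inequality holds for $\nu$ with a changed constant. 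The cutoff Sobolev inequality bounds $\sum f^2\,d\Gamma(\varphi)$ by a combination of $\mathcal E(f,f)$ and $r^{-\beta}\sum f^2\mu$, with the same cutoff function $\varphi$. Every quantity in it is a nonnegative sum of $\mu$-edge or $\mu$-vertex weights times nonnegative expressions. Hence each term for $\nu$ is comparable to the corresponding term for $\mu$, and the inequality persists with modified constants.

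The main obstacle is making the cutoff Sobolev step fully rigorous. This requires recalling the precise form of the condition in \cite{barlowbass1} and confirming that it involves only nonnegative sums weighted linearly by $\mu$. Those sums are the energy measure $\Gamma(\varphi)(x)=\sum_y\mu_{xy}(\varphi(x)-\varphi(y))^2$, the energy on annuli, and $L^2(\mu)$ norms. Once that form is confirmed, comparability is immediate. If instead one wanted to avoid the characterization entirely, a direct heat kernel comparison would fail, because $p_\nu$ and $p_\mu$ are not pointwise comparable in general. That is why the proof goes through the Barlow--Bass equivalence.
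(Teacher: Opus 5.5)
Your proposal is correct and matches the paper's approach. The paper gives no separate proof: it cites the Barlow--Bass equivalence of HK($\beta$) with volume doubling, PI($\beta$) and CS($\beta$) and notes that this implies stability under $\nu\asymp\mu$, while you supply the routine check that each condition and each standing assumption survives the change of weights. For the cutoff Sobolev step your reading holds: the inequality compares nonnegative sums that are linear in the edge and vertex weights, and the only other part of Barlow--Bass's CS($\beta$) is a H\"older-type regularity requirement on the cutoff function that does not involve the weights, so the same cutoff function works for $\nu$.
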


We now define a more specific version of the previous heat kernel estimates to address when $(V, \mu)$ has well-behaved volumes of balls.

\begin{definition}
Let $\beta\geq 2$ and $\alpha>0$. The weighted graph $(V, \mu)$ satisfies \textbf{HK$(\alpha, \beta)$} provided $(V, \mu)$ satisfies HK($\beta$) and also $V_\mu(x,r)\asymp r^\alpha$ over all $x\in V$ and all $r\geq 1$.
\end{definition}


\begin{lemma}\label{greenest0}
Let $(V, \mu)$ be a transient weighted graph satisfying HK$(\beta)$. Then we have that $$G_\mu(x,y)\asymp \sum_{n=d(x,y)}^\infty \frac{n^{\beta-1}}{V_\mu(x,n)}.$$
\end{lemma}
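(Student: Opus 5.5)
The plan is to write $G_\mu(x,y)=\sum_{n\ge 0}p_\mu(x,y,n)$ and plug in HK($\beta$) term by term. Since the walk moves at most one step at a time, $p_\mu(x,y,n)=0$ for $n<d(x,y)$, so only $n\ge d(x,y)$ contributes, and HK($\beta$) applies to every such $n$. We then reorganize the sum in the variable $m=\lfloor n^{1/\beta}\rfloor$. The block $\set{n\colon m^\beta\le n<(m+1)^\beta}$ has about $m^{\beta-1}$ elements. We will use volume doubling for $V_\mu$, which is part of the Barlow--Bass characterization of HK($\beta$) quoted before Theorem \ref{bassbarlowthm}. Doubling gives $V_\mu(x,n^{1/\beta})\asymp V_\mu(x,m)$ on each block, so the undamped part of the sum becomes $\sum_m m^{\beta-1}/V_\mu(x,m)$. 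Write $d=d(x,y)$ and assume first $d\ge 1$.

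\emph{Lower bound.} Keep only the terms with $n\ge d^\beta$. There $d^\beta/n\le 1$, so the exponential factor is bounded below by $e^{-c_2}$. Then
\[
G_\mu(x,y)\succeq\sum_{n\ge d^\beta}\frac{1}{V_\mu(x,n^{1/\beta})}\succeq \sum_{m\ge d}\frac{m^{\beta-1}}{V_\mu(x,m)}.
\]

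\emph{Upper bound.} Split the sum into $n\ge d^\beta$ and $d\le n<d^\beta$.

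The range $n\ge d^\beta$ is bounded above by the same block argument, dropping the exponential (it is at most $1$).

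For $d\le n<d^\beta$, group $n$ dyadically: $n\in[2^{-k-1}d^\beta,2^{-k}d^\beta)$ for $k\ge 0$. Doubling gives a constant $\alpha'>0$ with
\[
\frac{V_\mu(x,d)}{V_\mu(x,n^{1/\beta})}\preceq 2^{k\alpha'/\beta}.
\]
The exponential factor is at most $\exp(-c_4 2^{k/(\beta-1)})$. The $k$-th block has at most $2^{-k}d^\beta$ terms. So this part is at most
\[
\frac{d^\beta}{V_\mu(x,d)}\sum_{k\ge 0}2^{-k}2^{k\alpha'/\beta}e^{-c_4 2^{k/(\beta-1)}}\preceq \frac{d^\beta}{V_\mu(x,d)}.
\]
It remains to absorb this into the main sum. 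The terms $m\in[d,2d]$ alone contribute
\[
\sum_{m=d}^{2d}\frac{m^{\beta-1}}{V_\mu(x,m)}\succeq d\cdot \frac{d^{\beta-1}}{V_\mu(x,2d)}\asymp \frac{d^\beta}{V_\mu(x,d)},
\]
again by doubling. This finishes the upper bound.

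\emph{The case $d=0$ and the main obstacle.} When $x=y$, the right-hand side is read with the sum starting at $n=1$, or equivalently $d$ is replaced by $\max(d,1)$. The extra term $p_\mu(x,x,0)=1/\mu(x)$ is $\preceq 1/V_\mu(x,1)$, using controlled weights so that $\mu(x)\asymp V_\mu(x,1)$. It is therefore comparable to the $m=1$ term, and the same argument applies. I expect the main obstacle to be the middle range $d\le n<d^\beta$: one has to check that the polynomial loss from doubling, $V_\mu(x,d)/V_\mu(x,n^{1/\beta})$, is beaten by the stretched-exponential decay. One also has to make sure the resulting $d^\beta/V_\mu(x,d)$ is genuinely dominated by the main sum, which is where the $[d,2d]$ block estimate is used. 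Everything else is bookkeeping with doubling constants.
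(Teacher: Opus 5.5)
Your proposal is correct and takes essentially the same route as the paper, which only says to adapt Delmotte's Proposition 4.2 (the case $\beta=2$); that argument is this term-by-term summation of the two-sided bounds. The range $n\ge d^\beta$ gives $\sum_{m\ge d} m^{\beta-1}/V_\mu(x,m)$ through the blocks $m^\beta\le n<(m+1)^\beta$, and the range $d\le n<d^\beta$ is bounded by $d^\beta/V_\mu(x,d)$ using doubling against the stretched exponential, then absorbed into the terms $m\in[d,2d]$; your treatment of volume doubling (a consequence of HK($\beta$)) and of the diagonal case $x=y$ supplies details the paper leaves implicit.
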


\noindent\textbf{Proof:} The proof can be adapted with minimal changes from the proof of Proposition 4.2 in Delmotte \cite{delmotte1}, which proves the statement for $\beta=2$.\qed\\

The next statement follows directly from Lemma \ref{greenest0}.

\begin{lemma}\label{greenest}
Let $(V, \mu)$ be a transient weighted graph satisfying HK$(\alpha,\beta)$. Then $\alpha>\beta$ and we have that $$G_\mu(x,y)\asymp \frac{1}{d(x,y)^{\alpha-\beta}}.$$
\end{lemma}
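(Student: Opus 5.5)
Since Lemma \ref{greenest0} already gives $G_\mu(x,y)\asymp \sum_{n\ge d(x,y)} n^{\beta-1}/V_\mu(x,n)$, the plan is to substitute $V_\mu(x,n)\asymp n^\alpha$ (valid for $n\ge 1$, by HK$(\alpha,\beta)$) and evaluate the resulting power series. Before doing so we must show $\alpha>\beta$, because only then is the series finite.

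\textbf{Step 1: $\alpha>\beta$.} Apply Lemma \ref{greenest0} with $x=y$, where $d(x,x)=0$. The $n=0$ term is just $G$-independent bookkeeping: reading $V_\mu(x,0)=\mu(x)$ makes it finite, and in any case the comparison with a sum over $n\ge 1$ changes the Green's function only by a bounded term. The tail then satisfies
\[
\sum_{n\ge 1}\frac{n^{\beta-1}}{V_\mu(x,n)}\asymp \sum_{n\ge1} n^{\beta-1-\alpha},
\]
with constants uniform in $x$. Transience says $G_\mu(x,x)<\infty$. Hence this series converges, which forces $\beta-1-\alpha<-1$, that is, $\alpha>\beta$.

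\textbf{Step 2: the estimate for $x\ne y$.} Put $r=d(x,y)\ge 1$. Lemma \ref{greenest0} together with $V_\mu(x,n)\asymp n^\alpha$ gives
\[
G_\mu(x,y)\asymp \sum_{n\ge r} n^{\beta-1-\alpha}.
\]
The exponent is $\beta-1-\alpha<-1$. So comparing the sum with $\int_r^\infty t^{\beta-1-\alpha}\,dt$, with an extra factor $2^{|\beta-1-\alpha|}$ to pass between the values at $n$ and on $[n,n+1]$, yields $\asymp r^{\beta-\alpha}/(\alpha-\beta)$. For a fixed exponent the implied constants do not depend on $r$. This gives $G_\mu(x,y)\asymp d(x,y)^{-(\alpha-\beta)}$.

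\textbf{Step 3: the diagonal $x=y$.} Here $d(x,y)=0$, so the right-hand side should be read with the convention $d(x,y)\vee 1$. Step 1 shows that $G_\mu(x,x)$ is comparable to a positive constant uniformly in $x$, which matches this convention.

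The only point that needs care is Step 1, namely the justification of $\alpha>\beta$. It requires that the comparison constants in Lemma \ref{greenest0} are uniform in $x$, so that finiteness of $G_\mu(x,x)$ really forces convergence of the comparison series. It also requires handling the degenerate $n=0$ (or $r=0$) term, so that finiteness is not accidentally automatic or accidentally violated. Everything else is elementary comparison of sums with integrals.
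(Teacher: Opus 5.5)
Your proposal is correct and is exactly the argument the paper compresses into ``follows directly from Lemma \ref{greenest0}'': substitute $V_\mu(x,n)\asymp n^\alpha$, use $G_\mu(x,x)<+\infty$ to force convergence of $\sum_{n\ge 1} n^{\beta-1-\alpha}$ (hence $\alpha>\beta$), and compare $\sum_{n\ge d(x,y)} n^{\beta-1-\alpha}$ with $\int_{d(x,y)}^\infty t^{\beta-1-\alpha}\,dt$. Reading the diagonal case with $d(x,y)\vee 1$ is the right convention, and a small simplification applies to the step $\alpha>\beta$: the lower bound of Lemma \ref{greenest0} at a single point suffices, so uniformity of constants is not needed there (and the $n=0$ term is simply $0$ since $\beta\ge 2$).
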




\section{Schr\"odinger Operators and Green's Functions}

\begin{definition}
Let $(V, \mu)$ be a weighted graph and let $\fct{W}{V}{\Real}$ be a real-valued function. The \textbf{Schr\"odinger operator associated to $W$} is the operator $\Delta_\mu+W$, where we view $W$ as a multiplication operator on functions on $V$.
\end{definition}

The function $W$ is called the \textbf{potential} of the Schr\"odinger operator.


\begin{definition}
Let $(V, \mu)$ be a weighted graph and let $\fct{W}{V}{\Real}$. A real-valued function $u$ is called a \textbf{profile for $(\Delta_\mu+W)$} if $u$ is defined on all of $V$, $u>0$, and $(\Delta_\mu+W)u=0$.
\end{definition}


For technical purposes we require the following restriction on our potentials.

\begin{definition}\label{graphpotentialdef}
Let $(V, \mu)$ be a weighted graph. A function $\fct{W}{V}{\Real}$ is called a \textbf{graph potential on $(V, \mu)$} if there exists $\epsilon>0$ such that for all $x\in V$, \begin{equation} W(x)\leq K_\mu(x,x)-\epsilon.\end{equation}
\end{definition}

If the context of the weighted graph $(V, \mu)$ is clear, we refer to a graph potential on $(V, \mu)$ as simply a graph potential. For a graph potential $W$ on $(V, \mu)$ we define the kernel $K^{ W}_\mu(x,y)$ by $$ K^{W}_\mu(x,y)=\begin{cases} 
      K_\mu(x,y), & x\neq y \\
      K_\mu(x,x)-W(x), & x=y.
   \end{cases}$$

Now the kernels $K_{\mu,n}^{W}(x,y)$, the heat kernel $p_\mu^W(x,y,n),$ and the Green function $G_{\mu}^W(x,y)$ are defined analogously to those given in Section \ref{intro}.\\

The definition of a graph potential is precisely what is required so that $K_\mu^W(x,x) \geq \epsilon,$ mirroring the uniformly lazy condition. If $W$ is any potential satisfying $0\leq W(x) <1$ for all $x \in V,$ then $K_{\mu}^W$ is a sub-Markovian kernel since $\sum_{y \in V} K_{\mu}^W(x,y) = 1 - W(x)\leq 1$.\\

Note that if $f$ is a real-valued function on $V$, then $$K_{\mu}^Wf(x)=\sum_{y\in V} K^{W}_\mu(x,y)f(y)=\left(\sum_{y\in V} K^\mu(x,y)f(y)\right)-W(x)f(x)$$ and therefore $$I-K_{\mu}^W=\Delta_\mu+W.$$ 


\begin{definition}\label{subcriticaldef}
    Let $(V, \mu)$ be a weighted graph and $W$ a graph potential on $(V, \mu)$. The Schr\"odinger operator $\Delta_\mu+W$ is called \textbf{subcritical} if the Green's function $G_\mu^W(x,y)$ exists, in other words if $$\sum_{n=0}^\infty p_\mu^W(x,y,n)<+\infty$$ for some (any) $x,y\in V$.
\end{definition}

Note that $\Delta_\mu$ itself is subcritical if and only if $(V, \mu)$ is transient.\\

We now have a local Harnack inequality for solutions to a Schr\"odinger equation.

\begin{lemma}
Let $(V, \mu)$ be a weighted graph and let $W$ be a graph potential on $(V, \mu)$. There exists a constant $C=C(V, \mu)>0$ such that for any $u\geq 0$ with $$(\Delta_\mu+W)u(x)=0$$ we have $$u(y)\leq Cu(x)$$  for $y\sim x$. 
\end{lemma}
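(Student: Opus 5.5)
The plan is to rewrite the equation $(\Delta_\mu+W)u(x)=0$ as a mean-value identity and read off the bound term by term. Since $I-K_\mu^W=\Delta_\mu+W$, the equation says exactly that $u(x)=K_\mu^W u(x)$. Expanding the kernel gives
$$u(x)=\sum_{z\in V}K_\mu^W(x,z)u(z)=\bigl(K_\mu(x,x)-W(x)\bigr)u(x)+\sum_{z\sim x}K_\mu(x,z)u(z).$$

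Because $W$ is a graph potential, $K_\mu(x,x)-W(x)\geq\epsilon>0$. Together with $u\geq 0$, this makes every term on the right-hand side nonnegative, so we may drop all terms but one. Keeping only the term $z=y$ for a fixed neighbor $y\sim x$ gives
$$u(x)\geq K_\mu(x,y)u(y).$$

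Controlled weights give $K_\mu(x,y)=\mu_{xy}/\mu(x)\geq 1/C_0$, where $C_0$ is the controlled-weights constant. Hence $u(y)\leq C_0\,u(x)$, so the constant can be taken to be the controlled-weights constant $C_0$. It depends only on $(V,\mu)$ and not on $W$ or $u$.

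The only point needing care is the sign of the diagonal term $\bigl(K_\mu(x,x)-W(x)\bigr)u(x)$. If $W$ were allowed to exceed $K_\mu(x,x)$, this term could be negative and we could not discard it. The graph potential hypothesis is exactly what rules this out, and it is the only place that hypothesis enters.

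This is also why the statement concerns $u(y)$ for $y\sim x$ rather than all $y\simeq x$. The case $y=x$ is trivial anyway, since it only requires $C\geq 1$, which holds because $C_0>1$.
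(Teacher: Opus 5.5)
Your proof is correct and is essentially the paper's argument: both rewrite the equation at $x$ as $(1-K_\mu(x,x)+W(x))u(x)=\sum_{z\sim x}K_\mu(x,z)u(z)$, use the graph-potential bound on the diagonal coefficient together with $u\geq 0$, isolate the $y$ term, and apply controlled weights. Your observation that every term of $u(x)=K_\mu^W u(x)$ is nonnegative removes the paper's case split on the sign of $1-K_\mu(x,x)+W(x)$, and it gives the slightly better constant $C_0$ in place of the paper's $2C_1$.
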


\noindent\textbf{Proof:} From $$(\Delta_\mu+W)u(x)=0$$ we obtain $$u(x)+W(x)u(x)=K_\mu u$$ and therefore $$\big(1-K_\mu(x,x)+W(x)\big)u(x)=\sum_{y\sim x}K_\mu(x,y)u(y).$$ If $1-K_\mu(x,x)+W(x)=0$, this implies $u(y)=0$ for all $y\sim x$ and therefore any $C>0$ works. If $1-K_\mu(x,x)+W(x)<0$, we must have $u(x)=u(y)=0$ for all $y\sim x$ and therefore any $C>0$ works. Since $(V, \mu)$ has controlled weights, there exists $C_1>0$ such that $K_\mu(x,y)\geq 1/C_1$ for all $x\in V$ and all $y\sim x$. Also, we have that $1-K_\mu(x,x)+W(x)\leq 2$ since $W$ is a graph potential. Therefore, taking $$C=C(V, \mu)=2C_1$$ works, since $\big(1-K_\mu(x,x)+W(x)\big)u(x)=\sum_{y\sim x}K_\mu(x,y)u(y)$ implies $$K_\mu(x,y)u(y)\leq \big(1-K_\mu(x,x)+W(x)\big)u(x)\leq 2u(x)$$ and thus $u(y)\leq Cu(x)$.\qed\\

\noindent\textbf{Remark:} The proof above shows that existence of positive $u>0$ with $(\Delta_\mu+W)u=0$ on $V$ implies $1-K_\mu(x,x)+W(x)>0$ or $W(x)>K_\mu(x,x)-1$ for all $x\in V$.\\

One can iterate the previous lemma across paths to obtain the following:

\begin{lemma} \label{localharnackfull}
Let $\Omega\subseteq V$ be finite and connected and let $\fct{W}{\Omega}{\Real}$ be a graph potential. There exists $C=C(\Omega)>0$ such that for any $u\geq 0$ on $\Omega\cup\partial \Omega$ with $(\Delta_\mu+W)u=0$ on $\Omega$, we have $$\sup_\Omega u\leq C\inf_\Omega u.$$
\end{lemma}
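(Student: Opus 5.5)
The plan is to iterate the one-step local Harnack inequality (the preceding lemma) along paths inside $\Omega$ and use finiteness of $\Omega$ to bound path lengths uniformly. Fix $u\ge 0$ on $\Omega\cup\partial\Omega$ with $(\Delta_\mu+W)u=0$ on $\Omega$.

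\textbf{Step 1 (the single-step estimate still holds at points of $\Omega$).} For $x\in\Omega$ and $y\sim x$ with $y\in\Omega\cup\partial\Omega$, the previous lemma gives $u(y)\le C_0u(x)$ with $C_0=2C_1$. Its proof only uses three things: the equation at the single point $x$, the controlled-weights bound $K_\mu(x,y)\ge 1/C_1$, and the bound $1-K_\mu(x,x)+W(x)\le 2$. The last bound holds because $W(x)\le K_\mu(x,x)-\epsilon$, which gives $1-K_\mu(x,x)+W(x)\le 1-\epsilon<2$. The equation at $x$ involves $u$ only on the neighbours of $x$, all of which lie in $\Omega\cup\partial\Omega$. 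So the lemma applies verbatim at every $x\in\Omega$, even though $W$ is defined only on $\Omega$.

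\textbf{Step 2 (iteration along paths).} Let $x,y\in\Omega$. Since $\Omega$ is connected, there is a path $y=x_0,x_1,\dots,x_N=x$ with all $x_j\in\Omega$. Applying Step 1 at the point $x_{j+1}\in\Omega$ to its neighbour $x_j$ gives $u(x_j)\le C_0u(x_{j+1})$. Chaining these inequalities yields $u(y)\le C_0^{N}u(x)$.

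\textbf{Step 3 (uniform constant).} Let $D=\max_{x,y\in\Omega} d_\Omega(x,y)$, where $d_\Omega$ is the length of the shortest path inside $\Omega$. Because $\Omega$ is finite and connected, $D<\infty$. Taking $y$ to maximise $u$ and $x$ to minimise $u$ over the finite set $\Omega$ gives $\sup_\Omega u\le C_0^{D}\inf_\Omega u$. So $C=C_0^{D}$ works, and it depends only on $\Omega$ and the graph, not on $u$.

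There is no real obstacle; the only point needing care is Step 1. We must check that the one-step lemma uses the equation at only one vertex, and that the graph-potential condition on $\Omega$ suffices for its constant. Degenerate cases, where $u$ vanishes somewhere, are handled automatically. If $u(x)=0$ at some point, the chain shows $u\equiv 0$ on $\Omega$, and the inequality holds trivially.
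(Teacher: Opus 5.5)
Your proposal is correct and follows the paper's approach: the paper proves this lemma only by the remark that the one-step Harnack inequality is iterated along paths in $\Omega$. You supply the details the paper leaves implicit: the one-step bound uses the equation only at the single vertex $x\in\Omega$, and finiteness and connectedness of $\Omega$ bound path lengths by $D$, giving $C=C_0^{D}$ (which uses $C_0=2C_1\ge 1$, so that $C_0^{N}\le C_0^{D}$ for $N\le D$).
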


Notice this implies that $u$ is either strictly positive on $\Omega$ or identically zero. Via a diagonalization procedure we obtain the following version of the Harnack principle:

\begin{lemma}\label{harnackprinciple}
Let $\Omega\subseteq V$ be connected and let $\fct{W}{\Omega}{\Real}$ be a graph potential. Let $(u_n)_{n\geq 0}$ be a sequence of functions such that $u_n\geq 0$ on $\Omega\cup \partial \Omega$ and $(\Delta_\mu+W)u_n=0$ on $\Omega$. Then either $u_n\rightarrow +\infty$ pointwise on $\Omega$ or there exists $u\geq 0$ with $(\Delta_\mu+W)u=0$ on $\Omega$ and a subsequence $(u_{n_k})_{k\geq 0}$ of $(u_n)_{n \geq 0}$ such that $u_{n_k}\rightarrow u$ pointwise on $\Omega$.\qed
\end{lemma}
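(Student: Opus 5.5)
The plan is a standard compactness and diagonal-extraction argument, with Lemma \ref{localharnackfull} supplying the uniform bounds on finite sets.

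First I exhaust $\Omega$ by finite connected sets. Fix a base point $x_0\in\Omega$, and let $\Omega_1\subseteq\Omega_2\subseteq\cdots$ be finite connected subsets of $\Omega$ containing $x_0$ with $\bigcup_m\Omega_m=\Omega$. One choice is to let $\Omega_m$ be the set of points of $\Omega$ joined to $x_0$ by a path in $\Omega$ of length at most $m$. This set is connected by construction. It is finite because controlled weights give each vertex at most $C$ neighbours. Its union is $\Omega$ because $\Omega$ is connected.

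Next I split into two cases according to the behaviour of $u_n(x_0)$.

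\emph{Case 1: $u_n(x_0)\to+\infty$.} Fix $x\in\Omega$ and pick $m$ with $x\in\Omega_m$. Each $u_n$ is nonnegative on $\Omega_m\cup\partial\Omega_m$, since $\partial\Omega_m\subseteq\Omega\cup\partial\Omega$. Each $u_n$ also solves $(\Delta_\mu+W)u_n=0$ on $\Omega_m$, and the restriction of $W$ to $\Omega_m$ is still a graph potential. So Lemma \ref{localharnackfull} gives
$$u_n(x)\geq \inf_{\Omega_m}u_n\geq C(\Omega_m)^{-1}\sup_{\Omega_m}u_n\geq C(\Omega_m)^{-1}u_n(x_0)\to+\infty.$$
Hence $u_n\to+\infty$ pointwise on $\Omega$.

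\emph{Case 2: $u_n(x_0)\not\to+\infty$.} Then there is a subsequence along which $u_n(x_0)\leq M$ for some $M<\infty$. Applying Lemma \ref{localharnackfull} on $\Omega_m$ along this subsequence gives
$$\sup_{\Omega_m}u_n\leq C(\Omega_m)\,M,$$
so the subsequence is uniformly bounded on each finite set $\Omega_m$. Since $\Omega$ is countable, a Cantor diagonal argument (Bolzano--Weierstrass at each point in turn) extracts a further subsequence $u_{n_k}$ converging pointwise on $\Omega$ to some limit $u\geq 0$.

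It remains to check that $(\Delta_\mu+W)u=0$ on $\Omega$. For $x\in\Omega$,
$$(\Delta_\mu+W)u_{n_k}(x)=\big(1+W(x)\big)u_{n_k}(x)-\sum_{y\simeq x}K_\mu(x,y)u_{n_k}(y).$$
The sum has finitely many terms, because controlled weights bound the number of neighbours. It involves values at points of $\partial\Omega$ as well as $\Omega$, so convergence is also needed at neighbours $y\in\partial\Omega$ of $x$.

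This is the main obstacle. The bound from Lemma \ref{localharnackfull} is stated on $\Omega_m$, not on its boundary. I would get boundedness at boundary points directly from the equation. The proof of the previous lemma shows
$$K_\mu(x,y)\,u_n(y)\leq 2u_n(x),$$
using only the equation at $x\in\Omega$ and nonnegativity of $u_n$. Controlled weights then give $u_n(y)\leq 2C_1u_n(x)$ for every $y\simeq x$, including $y\in\partial\Omega$. So the subsequence is also bounded on $\partial\Omega$, and I include these countably many points in the diagonal extraction, defining $u$ on $\Omega\cup\partial\Omega$.

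With convergence at every point of $\Omega\cup\partial\Omega$, I pass to the limit term by term in the finite sum. This yields $(\Delta_\mu+W)u=0$ on $\Omega$ with $u\geq0$, which completes the proof.
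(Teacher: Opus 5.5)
Your proposal is correct and is exactly the diagonalization argument the paper invokes. The paper gives no details beyond the phrase ``via a diagonalization procedure.'' Your additional step fills in a point the paper leaves implicit: you use the one-step bound $K_\mu(x,y)u_n(y)\le 2u_n(x)$ to extract convergence at the points of $\partial\Omega$ as well, so that $(\Delta_\mu+W)u$ is actually defined on $\Omega$ and the limit equation holds.
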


\begin{lemma}
Let $(V, \mu)$ be a transient weighted graph and let $\Delta_\mu+W$ be subcritical. Then there exists a profile for $\Delta_\mu+W$, i.e. a function $h>0$ satisfying $$(\Delta_\mu+W)h=0.$$
\end{lemma}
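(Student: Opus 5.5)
The plan is to build $h$ as a normalized limit of Green's functions of $\Delta_\mu+W$ with poles escaping to infinity, and to use the Harnack principle (Lemma \ref{harnackprinciple}) for compactness.

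\emph{Step 1: Green's functions of the Schrödinger operator.} Since $\Delta_\mu+W$ is subcritical, $G^W_\mu(x,y)=\sum_{n\ge 0}p^W_\mu(x,y,n)$ is finite for all $x,y$. Using $p^W_\mu(x,y,n+1)=\sum_z K^W_\mu(x,z)p^W_\mu(z,y,n)$ and summing in $n$ (the sum is absolutely convergent), we get
\[
(\Delta_\mu+W)G^W_\mu(\cdot,y)(x)=\frac{1_{\set{y}}(x)}{\mu(y)}.
\]
So $G^W_\mu(\cdot,y)$ is $(\Delta_\mu+W)$-harmonic on $V\setminus\set{y}$. Positivity should come from $K^W_\mu(x,x)\ge\epsilon$ together with $K^W_\mu(x,z)=K_\mu(x,z)>0$ for $z\sim x$: by connectedness, every entry $K^W_{\mu,n}(x,y)$ with $n\ge d(x,y)$ is positive. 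One point needs care here. $W$ may be negative, so $K^W_\mu$ has nonnegative entries but need not be sub-Markovian. This does not matter, since only nonnegativity of the entries is used.

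\emph{Step 2: normalization and limit.} Fix $o\in V$ and a sequence $y_k$ with $d(o,y_k)\to\infty$; this is possible because $V$ is infinite and connected with controlled weights, hence locally finite. Set $u_k(x)=G^W_\mu(x,y_k)/G^W_\mu(o,y_k)$, so $u_k>0$ and $u_k(o)=1$. For fixed $R$, once $d(o,y_k)>R+1$ we have $(\Delta_\mu+W)u_k=0$ on $\Omega_R=B(o,R)$, which is finite and connected. Lemma \ref{harnackprinciple} applies on each $\Omega_R$, and since $u_k(o)=1$ the alternative $u_k\to+\infty$ is excluded. It yields a subsequence converging pointwise on $\Omega_R$. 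A diagonal argument over $R=1,2,\dots$ then gives a subsequence converging pointwise on all of $V$ to some $h\ge0$ with $h(o)=1$. Pointwise convergence passes through the finite sums defining $\Delta_\mu+W$, so $(\Delta_\mu+W)h=0$ on $V$.

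\emph{Step 3: strict positivity.} On each $\Omega_R$, Lemma \ref{localharnackfull} says $h$ is either identically zero or strictly positive. Since $h(o)=1$, we get $h>0$ everywhere.

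The main obstacle is the Harnack step. Lemma \ref{localharnackfull} needs the Harnack constant on $\Omega_R$ to be independent of $k$; it is, because it depends only on $\Omega_R$, the weights and $W$. The lemmas are stated for $W$ a graph potential on $\Omega$, and the restriction of $W$ to $\Omega_R$ qualifies. I would also check that the diagonalization in Lemma \ref{harnackprinciple} is applied on the exhausting finite balls rather than to a single set where the poles might lie.

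Transience is not used beyond what subcriticality already gives. It would only matter for an alternative construction via $h=\lim$ of solutions of Dirichlet problems.
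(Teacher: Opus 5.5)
Your proposal is correct and follows the same route as the paper: you normalize $G^W_\mu(\cdot,y_k)$ at a base point with poles $y_k\to\infty$, extract a pointwise limit via the Harnack principle, and check that the limit is a positive solution of $(\Delta_\mu+W)h=0$. Your version is more careful than the paper's on points it leaves implicit: you apply the Harnack principle on exhausting balls $B(o,R)$ that avoid the poles, and you verify explicitly that both $G^W_\mu$ and the limit $h$ are strictly positive.
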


\noindent\textbf{Proof:} Let $(x_n)$ be a sequence in $V$ tending to infinity. Let $$h_n(x):= \frac{G_\mu^W(x, x_n)}{G_\mu^W(x_0, x_n)}.$$ Note that $h_n(x_0)=1$ for all $n\geq 0$ and $(\Delta_\mu+W)h_n=0$ on $V\setminus \set{x_n}$. By the Harnack principle Lemma \ref{harnackprinciple}, $(h_n)_{n\geq 0}$ has a pointwise convergent subsequence. The pointwise limit $h$ must be a profile for $\Delta_\mu+W$.\qed\\

We now define a scale-invariant version of the Harnack inequality for solutions to Laplace's equation. This does not hold on all graphs, but has useful consequences.

\begin{definition}
A weighted graph $(V, \mu)$ satisfies the \textbf{scale-invariant elliptic Harnack inequality (EHI)} if there exists a constant $C_H$ such that for any ball $B$ and any non-negative $\Delta_\mu$-harmonic function $u$ defined on $2B$, we have $$\sup_B u\leq C_H\inf_B u.$$
\end{definition}

We now have the classical Liouville theorem.

\begin{lemma}\label{liouville}
(Liouville's Theorem) Let $(V, \mu)$ be a weighted graph satisfying (EHI). Let $u>0$ be a positive $\Delta_\mu$-harmonic function defined on all of $V$. Then $u$ is constant.
\end{lemma}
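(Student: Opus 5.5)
The plan is the classical Moser-type argument: first show that $u$ has small oscillation on large balls, then rescale to conclude it is constant. Fix a base point $o\in V$ and set $m=\inf_V u\geq 0$. Then $v=u-m$ is a non-negative $\Delta_\mu$-harmonic function on all of $V$ with $\inf_V v=0$, and it suffices to show $v\equiv 0$.

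Take any two points $x,y\in V$ and choose $R$ with $R>d(o,x)$ and $R>d(o,y)$, so that both points lie in $B=B(o,R)$. Since $v$ is defined and harmonic on all of $V$, it is harmonic on $2B$. Let $m_k=\inf_{B(o,R_k)} v$ with $R_k=2^kR$.

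For each $k$, the function $v-m_{k+1}$ is non-negative and harmonic on $B(o,R_{k+1})=2B(o,R_k)$. Applying (EHI) on $B(o,R_k)$ gives
$$\sup_{B(o,R)}(v-m_{k+1})\leq \sup_{B(o,R_k)}(v-m_{k+1})\leq C_H\inf_{B(o,R_k)}(v-m_{k+1})=C_H(m_k-m_{k+1}).$$

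Now let $k\to\infty$. The sequence $m_k$ is non-increasing and bounded below by $0$, so it converges, and its limit is $\inf_V v=0$ because the balls exhaust $V$ (the graph is connected). Hence $m_k-m_{k+1}\to 0$ and $m_{k+1}\to 0$. Passing to the limit in the display gives $\sup_{B(o,R)} v\leq 0$. Therefore $v=0$ on $B(o,R)$, so $v(x)=v(y)=0$, i.e.\ $u(x)=u(y)=m$. Since $x,y$ were arbitrary, $u$ is constant.

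The main point requiring care is that we use the Harnack inequality on the shifted function $v-m_{k+1}$, which need not be positive everywhere on $V$. This is legitimate because the definition of (EHI) only requires non-negativity and harmonicity on $2B$, and $v-m_{k+1}\geq 0$ holds on $B(o,R_{k+1})=2B(o,R_k)$. No oscillation-decay iteration is needed, because the limit of the $m_k$ is exactly the global infimum.
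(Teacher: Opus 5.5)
Your argument is correct and is essentially the paper's: subtract $\inf_V u$ and apply (EHI) on balls exhausting $V$, using that the infimum over these balls tends to $0$. The shift by $m_{k+1}$ is unnecessary, since $v\geq 0$ on all of $V$ already gives $\sup_{B(o,R)}v\leq \sup_{B(o,R_k)}v\leq C_H m_k\to 0$; dropping the shift also removes your reliance on the weak reading of (EHI) (non-negativity required only on $2B$, not on $\partial(2B)$ or on all of $V$), which your shifted function needs.
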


\noindent\textbf{Proof:} Let $w=u-\inf_V u$. This function is harmonic and non-negative. We also have $\inf_V w=0$. Fix $x\in M$ and note that $$\sup_{B(x,r)} w\leq C_H \inf_{B(x,r)} w$$ for any $r>0$. Letting $r\rightarrow +\infty$ we see that $\sup_{V} w=0$. Hence $u$ is constant.\qed\\

\begin{definition}[Green-bounded] Let $(V, \mu)$ be a transient weighted graph. A real-valued function $W$ on $V$ is \textbf{Green-bounded} on $(V, \mu)$ if $$\norm{W}_\mu:=\sup_{x\in V} \sum_{y\in V} \abs{W(y)}G_\mu(x,y)\mu(y)<+\infty.$$\end{definition}

\begin{definition}\label{katoinfinity}
Let $(V, \mu)$ be a transient weighted graph. For a real-valued function $W$ on $V,$ the \textbf{Kato infinity class} $K^\infty(V, \mu, \alpha)$ is defined as follows: $W\in K^\infty(V, \mu, \alpha)$ provided $$\lim_{R\rightarrow \infty} \sup_{x\in V} \sum_{y\in V\setminus B(o, R)} \abs{W(y)}G_\mu(x,y)\mu(y)<\alpha.$$ We also define $$K^\infty(V, \mu)=\bigcap_{s>0} K^\infty(V, \mu, s).$$
\end{definition}

\begin{lemma}\label{katodecay}
Let $(V, \mu)$ be a transient weighted graph satisfying HK($\alpha$, $\beta$). Fix $o\in M$ and for $x\in V$ set $\brac{x}=1+d(o, x)$. Assume that $\fct{W}{V}{\Real}$ satisfies $\abs{W(x)}\preceq \brac{x}^{-(\beta+\epsilon)}$ for some $\epsilon>0$. Then $W\in K^\infty(V, \mu)$.
\end{lemma}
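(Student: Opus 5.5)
Under HK($\alpha$,$\beta$), Lemma \ref{greenest} gives $G_\mu(x,y)\asymp d(x,y)^{-(\alpha-\beta)}$ for $x\neq y$ (with $G_\mu(x,x)$ bounded), and $\mu(y)\asymp 1$ since $V_\mu(y,1)\asymp 1$. The plan is to bound
$$S_R(x):=\sum_{y\notin B(o,R)} \brac{y}^{-(\beta+\epsilon)}\,\bigl(1+d(x,y)\bigr)^{-(\alpha-\beta)}$$
uniformly in $x$ by a quantity tending to $0$ as $R\to\infty$. Since the limit must be $0$, this gives membership in $K^\infty(V,\mu,s)$ for every $s>0$. The main tool is a dyadic shell count: $\#\set{y: 2^k\le d(z,y)<2^{k+1}}\preceq 2^{k\alpha}$ for any center $z$, which follows from $V_\mu(z,r)\asymp r^\alpha$ and $\mu\asymp 1$. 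We may assume $\epsilon<\alpha-\beta$; this is harmless because $\alpha>\beta$ and decreasing $\epsilon$ weakens the hypothesis.

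Fix $x$ and set $r=d(o,x)$. Split the sum over $y$ with $d(o,y)>R$ into three regions.

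\textbf{Region A:} $d(x,y)\ge \tfrac12 d(o,y)$. Here $G_\mu(x,y)\preceq \brac{y}^{-(\alpha-\beta)}$, so the contribution is at most
$$\sum_{d(o,y)>R}\brac{y}^{-(\alpha+\epsilon)}\preceq \sum_{2^k\ge R}2^{k\alpha}2^{-k(\alpha+\epsilon)}\preceq R^{-\epsilon}.$$

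\textbf{Region B:} $d(x,y)<\tfrac12 d(o,y)$. Then $d(o,y)\asymp r$, so $\brac{y}^{-(\beta+\epsilon)}\asymp \brac{x}^{-(\beta+\epsilon)}$. Also $d(x,y)\le d(o,y)/2$ forces $d(o,y)\le 2r$, hence $d(x,y)\le r$ and $r\ge R/2$. Summing $(1+d(x,y))^{-(\alpha-\beta)}$ over the ball $B(x,r)$ by dyadic shells around $x$ gives $\sum_{2^k\le r}2^{k\alpha}2^{-k(\alpha-\beta)}\preceq r^\beta$. The contribution is therefore at most $\brac{x}^{-(\beta+\epsilon)}r^\beta\preceq r^{-\epsilon}\preceq R^{-\epsilon}$. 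Region B is empty unless $r\ge R/2$, so this bound applies whenever it is needed.

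These two regions cover every $y$ with $d(o,y)>R$, so no third region is actually required. Hence $\sup_x S_R(x)\preceq R^{-\epsilon}\to 0$, with constants independent of $x$. Combined with $\abs{W(y)}\preceq\brac{y}^{-(\beta+\epsilon)}$, this gives $W\in K^\infty(V,\mu)$.

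The only delicate point is Region B. The estimate is only uniform because the local ball sum $\sum_{B(x,r)}G_\mu(x,\cdot)\preceq r^\beta$ is exactly offset by the decay $\brac{x}^{-\beta-\epsilon}$. This is where the threshold $\beta+\epsilon$ is sharp: with exponent exactly $\beta$ the bound would be $O(1)$ rather than tending to $0$. I would make sure the shell-counting constants depend only on the HK($\alpha$,$\beta$) volume constants, and treat the single term $y=x$ separately using the boundedness of $G_\mu(x,x)$.
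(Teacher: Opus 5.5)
Your proof is correct. It uses the same ingredients as the paper's: the Green function bound $G_\mu(x,y)\asymp d(x,y)^{-(\alpha-\beta)}$ and dyadic shell sums against $V_\mu(z,r)\asymp r^\alpha$. Where you differ is the decomposition.

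The paper splits the sum into three pieces:
\begin{itemize}
\item the far region $d(o,y)\ge 2\brac{x}$, where $d(x,y)\succeq\brac{y}$;
\item the ball $B(x,\tfrac12\brac{x})$, where $\brac{y}\asymp\brac{x}$;
\item the intermediate set $B(o,2\brac{x})\setminus B(x,\tfrac12\brac{x})$, handled separately by freezing $d(x,y)\asymp\brac{x}$ and summing $\brac{y}^{-(\beta+\epsilon)}$ over $B(o,2\brac{x})$.
\end{itemize}
Your dichotomy $d(x,y)\ge\tfrac12 d(o,y)$ versus $d(x,y)<\tfrac12 d(o,y)$ sorts points directly by which of the first two mechanisms applies. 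On Region A, $G_\mu(x,y)\preceq\brac{y}^{-(\alpha-\beta)}$; on Region B, $\brac{y}\asymp\brac{x}$ and $d(x,y)\le d(o,x)$. So the intermediate set is absorbed and no third computation is needed.

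This has two by-products. First, your estimates never use $\epsilon<\alpha-\beta$, so that preliminary reduction can be dropped. The paper's intermediate computation, $\sum_{n\le N}(2^n)^{\alpha-\beta-\epsilon}\asymp(2^N)^{\alpha-\beta-\epsilon}$, does rely on it tacitly. Second, the weight $(1+d(x,y))^{-(\alpha-\beta)}$, together with the uniform bound on $G_\mu(x,x)$ (from the HK upper bound and $\alpha>\beta$), handles the terms near the diagonal cleanly.

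One small correction: converting $\mu$-volumes into vertex counts needs $\mu\succeq 1$. That lower bound comes from controlled weights (bounded degree and comparable neighbouring weights), not from $V_\mu(y,1)\asymp 1$ alone. Alternatively, keep the factor $\mu(y)$ and sum it over each shell directly; then only the volume bound $V_\mu(z,2^{k+1})\preceq 2^{k\alpha}$ is required.
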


\noindent\textbf{Proof:} Let $R>1$ and let $\Gamma=V\setminus B(o, R)$. Let $x\in V$. We have that $$\sum_{y\in \Gamma} G_\mu(x,y)\abs{W(y)}\mu(y)\preceq \sum_{y\in \Gamma} \frac{\brac{y}^{-(\beta+\epsilon)}}{d(x,y)^{\alpha-\beta}}\mu(y).$$ For $y\in \Gamma$ such that $d(y, o)\geq 2 d(x, o)$ we have that $$d(x,y)\geq d(y,o)-d(x,o)\geq d(y,o)-\frac{1}{2}d(y,o)\succeq \brac{y}.$$ 

Therefore for such $y\in \Gamma$ we have $$\frac{\brac{y}^{-(\beta+\epsilon)}}{d(x,y)^{\alpha-\beta}}\preceq \brac{y}^{-(\alpha+\epsilon)}.$$

We then have that $$\sum_{y\in \Gamma} \frac{\brac{y}^{-(\beta+\epsilon)}}{d(x,y)^{\alpha-\beta}}\mu(y)=\sum_{y\in \Gamma\setminus B(o, 2\brac{x})} \frac{\brac{y}^{-(\beta+\epsilon)}}{d(x,y)^{\alpha-\beta}}\mu(y)+\sum_{y\in \Gamma\cap B(o, 2\brac{x})}\frac{\brac{y}^{-(\beta+\epsilon)}}{d(x,y)^{\alpha-\beta}}\mu(y)$$ $$\preceq \sum_{y\in \Gamma\setminus B(o, 2\brac{x})} \brac{y}^{-(\alpha+\epsilon)}\mu(y)+\sum_{y\in \Gamma \cap B(o, 2\brac{x})}\frac{\brac{y}^{-(\beta+\epsilon)}}{d(x,y)^{\alpha-\beta}}\mu(y).$$ 

First, let $\mathfrak{L}\in \N$ be such that $2^{\mathfrak{L}}\leq R\leq 2^{\mathfrak{L}+1}$. We then have that $$\sum_{y\in \Gamma\setminus B(o, 2\brac{x})} \brac{y}^{-(\alpha+\epsilon)}\mu(y)\leq \sum_{y\in \Gamma} \brac{y}^{-(\alpha+\epsilon)}\mu(y)\preceq \sum_{n=\mathfrak{L}}^\infty \left(\sum_{y\in B(o, 2^{n+1})\setminus B(o, 2^n)}\brac{y}^{-(\alpha+\epsilon)}\mu(y)\right)$$ $$\asymp \sum_{n=\mathfrak{L}}^\infty \left(\sum_{y\in B(o, 2^{n+1})\setminus B(o, 2^n)}(2^n)^{-(\alpha+\epsilon)}\mu(y)\right)\preceq \sum_{n=\mathfrak{L}}^\infty (2^n)^{-\epsilon}\preceq R^{-\epsilon}.$$

 Next, if $\abs{x}\leq R/2$, then the next term is zero. Otherwise, $$\sum_{y\in \Gamma\cap B(o, 2\brac{x})}\frac{\brac{y}^{-(\beta+\epsilon)}}{d(x,y)^{\alpha-\beta}}\mu(y)=\sum_{y\in B(x, \frac{1}{2}\brac{x})}\frac{\brac{y}^{-(\beta+\epsilon)}}{d(x,y)^{\alpha-\beta}}\mu(y)+\sum_{y\in B(o, 2\brac{x})\setminus B(x, \frac{1}{2}\brac{x})}\frac{\brac{y}^{-(\beta+\epsilon)}}{d(x,y)^{\alpha-\beta}}\mu(y).$$ 

If $y\in B(x, \frac{1}{2}\brac{x})$, then $\brac{y}\asymp \brac{x}$. Letting $N\in \N$ with $2^N\leq \brac{x}\leq 2^{N+1}$ we then have that $$\sum_{y\in B(x, \frac{1}{2}\brac{x})}\frac{\brac{y}^{-(\beta+\epsilon)}}{d(x,y)^{\alpha-\beta}}\mu(y)\asymp \sum_{y\in B(x, \frac{1}{2}\brac{x})}\frac{\brac{x}^{-(\beta+\epsilon)}}{d(x,y)^{\alpha-\beta}}\mu(y)$$ $$\preceq \brac{x}^{-(\beta+\epsilon)}\sum_{n=0}^N \sum_{y\in B(x, 2^{n+1})\setminus B(x, 2^n)}\frac{\mu(y)}{d(x,y)^{\alpha-\beta}} \asymp \brac{x}^{-(\beta+\epsilon)}\sum_{n=0}^N \sum_{y\in B(x, 2^{n+1})\setminus B(x, 2^n)}\frac{\mu(y)}{(2^n)^{\alpha-\beta}}$$ $$\preceq\brac{x}^{-(\beta+\epsilon)}\sum_{n=0}^N \frac{(2^n)^\alpha}{(2^n)^{\alpha-\beta}}=\brac{x}^{-(\beta+\epsilon)}\sum_{n=0}^N (2^\beta)^n\asymp \brac{x}^{-(\beta+\epsilon)}(2^N)^\beta\asymp \brac{x}^{-\epsilon}\preceq R^{-\epsilon}.$$

Finally, if $y\in B(o, 2\brac{x})\setminus B(x, \frac{1}{2}\brac{x})$, then $d(x,y)\asymp \brac{x}$. Therefore letting $N$ be as before with $2^N\leq \brac{x}\leq 2^{N+1}$, we have $$\sum_{y\in B(o, 2\brac{x})\setminus B(x, \frac{1}{2}\brac{x})}\frac{\brac{y}^{-(\beta+\epsilon)}}{d(x,y)^{\alpha-\beta}}\mu(y)\asymp \brac{x}^{\beta-\alpha}\left(\sum_{y\in B(o, 2\brac{x})\setminus B(x, \frac{1}{2}\brac{x})}\brac{y}^{-(\beta+\epsilon)}\mu(y)\right)$$ $$\preceq \brac{x}^{\beta-\alpha}\sum_{n=0}^N \left(\sum_{y\in B(o, 2^{n+1})\setminus B(o, 2^{n+1})}\brac{y}^{-(\beta+\epsilon)}\mu(y)\right)\asymp \brac{x}^{\beta-\alpha}\sum_{n=0}^N \left(\sum_{y\in B(o, 2^{n+1})\setminus B(o, 2^{n+1})}(2^n)^{-(\beta+\epsilon)}\mu(y)\right)$$ $$\preceq \brac{x}^{\beta-\alpha}\sum_{n=0}^N (2^n)^{\alpha-(\beta+\epsilon)}$$ $$\asymp \brac{x}^{\beta-\alpha}(2^N)^{\alpha-(\beta+\epsilon)}\asymp \brac{x}^{-\epsilon}\preceq R^{-\epsilon}.$$

Putting all three pieces together, we find that $$\sum_{y\in V\setminus B(o, R)} G_\mu(x,y)\abs{W(y)}\mu(y)\preceq R^{-\epsilon},$$ with this estimate being uniform in $x$. This proves that $W\in K^\infty(V, \mu)$.\qed\\

\begin{lemma}\label{griglemma}
Let $(V, \mu)$ be a transient weighted graph and let $\fct{W}{V}{\Real}$ have finite support. Then $W$ is Green-bounded on $(V, \mu)$.
\end{lemma}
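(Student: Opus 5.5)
Since $W$ has finite support, write $S=\operatorname{supp}W$. Then
$$\sum_{y\in V}\abs{W(y)}G_\mu(x,y)\mu(y)=\sum_{y\in S}\abs{W(y)}\mu(y)\,G_\mu(x,y)\le \Big(\max_{y\in S}\abs{W(y)}\mu(y)\Big)\sum_{y\in S}G_\mu(x,y).$$
Because $S$ is finite, it therefore suffices to show that for each fixed $y\in V$ the function $x\mapsto G_\mu(x,y)$ is bounded on $V$. My plan is to prove the stronger statement $G_\mu(x,y)\le G_\mu(y,y)$ for all $x\in V$. Transience gives $G_\mu(y,y)<+\infty$, so this bound settles the lemma.

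To prove $G_\mu(x,y)\le G_\mu(y,y)$, I will use a first-hitting decomposition of the walk. Let $(X_n)$ be the random walk with kernel $K_\mu$ and let $\tau_y$ be its hitting time of $y$. Since $p_\mu(x,y,n)=K_{\mu,n}(x,y)/\mu(y)$, we have $G_\mu(x,y)\mu(y)=\mathbb{E}_x[\#\{n\ge 0: X_n=y\}]$. Visits to $y$ can occur only after $\tau_y$, so the strong Markov property gives
$$G_\mu(x,y)=\mathbb{P}_x(\tau_y<\infty)\,G_\mu(y,y)\le G_\mu(y,y).$$

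If I want to avoid probabilistic language, I will instead argue with a maximum principle. The function $u=G_\mu(\cdot,y)$ is nonnegative and satisfies $\Delta_\mu u=0$ off $\{y\}$. Truncating the sum defining $G_\mu$ to finite sets $\Omega_k\uparrow V$ gives Dirichlet Green's functions $G_{\Omega_k}(\cdot,y)$ that vanish outside $\Omega_k$. Each of these is harmonic on $\Omega_k\setminus\{y\}$, so by the maximum principle its maximum is attained at $y$. By monotone convergence, letting $k\to\infty$ yields the bound.

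The only step needing care is this maximum-principle or strong-Markov step. In the discrete setting it is standard: one can write $K_{\mu,n}(x,y)=\sum_{m\le n}\mathbb{P}_x(\tau_y=m)K_{\mu,n-m}(y,y)$ and sum over $n$. Once it is in place, we obtain $\norm{W}_\mu\le \abs{S}\cdot\max_{y\in S}\abs{W(y)}\mu(y)G_\mu(y,y)<+\infty$.
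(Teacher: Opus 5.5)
Your proof is correct, but it takes a more direct route than the paper.

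The paper's argument runs as follows:
\begin{itemize}
\item It fixes a finite connected $\Omega\supseteq\operatorname{supp}W$ and handles $x\in\Omega\cup\partial\Omega$ by finiteness.
\item For $x$ outside that set, it applies the local Harnack inequality (Lemma \ref{localharnackfull}) to $y\mapsto G_\mu(x,y)$, which is harmonic on $\Omega$. This gives $G_\mu(x,y)\asymp G_\mu(x,o)$ uniformly in $y\in\Omega$.
\item It then cites $G_\mu(x,o)\le G_\mu(o,o)$ from Barlow, Theorem 1.31.
\end{itemize}

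You instead apply the diagonal bound $G_\mu(x,y)\le G_\mu(y,y)$ at each of the finitely many points of the support. This removes the need for both the near/far split and the Harnack step. You also prove the bound yourself rather than citing it, via $G_\mu(x,y)=\mathbb{P}_x(\tau_y<\infty)\,G_\mu(y,y)$. Your first-hitting identity $K_{\mu,n}(x,y)=\sum_{m\le n}\mathbb{P}_x(\tau_y=m)K_{\mu,n-m}(y,y)$ is exactly what is needed, and the maximum-principle variant works as well. Since the paper already invokes this same inequality, your argument shows its Harnack step is superfluous on graphs.

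The paper's route does buy two things:
\begin{itemize}
\item Extra information: a two-sided comparison $\sum_y\abs{W(y)}G_\mu(x,y)\mu(y)\asymp G_\mu(x,o)\sum_y\abs{W(y)}\mu(y)$ for $x$ far from the support. In other words, a finitely supported potential looks like a point mass from far away.
\item Portability: the Harnack comparison is the argument that carries over to continuous settings. There the Green function is infinite on the diagonal, so a bound of the form $G(x,y)\le G(y,y)$ is unavailable.
\end{itemize}
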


\noindent\textbf{Proof:} Let $\Omega\subseteq V$ be a finite connected set containing the support of $W$. It suffices to prove separately that $$\sup_{x\in \Omega\cup\partial \Omega}\sum_{y\in V}\abs{W(y)}G_\mu(x,y)\mu(y)<+\infty$$ and $$\sup_{x\notin \Omega\cup\partial \Omega}\sum_{y\in V}\abs{W(y)}G_\mu(x,y)\mu(y)<+\infty.$$

First note that $\Omega\cup\partial \Omega$ is finite, and since a supremum over a finite set of real numbers is finite, we have $$\sup_{x\in \Omega\cup\partial \Omega}\sum_{y\in V}\abs{W(y)}G_\mu(x,y)\mu(y)<+\infty.$$

Next, fix a distinguished point $o\in \Omega$. For $x\notin \Omega\cup \partial \Omega$, the function $u_x(y)=G_\mu(x,y)$ satisfies $\Delta_\mu u_x=0$ on $\Omega$. Therefore by the Harnack inequality of Lemma \ref{localharnackfull}, there exists $C>0$ such that for all $x\notin\Omega\cup\partial \Omega$, $$\sup_\Omega u_x\leq C\inf_\Omega u_x.$$ Hence $G_\mu(x,y)\asymp G_\mu(x, o)$ over all $x\notin \Omega\cup\partial \Omega$ and all $y\in \Omega$. Therefore for $x\notin \Omega\cup \partial \Omega$ we have $$\sum_{y\in V} \abs{W(y)}G_\mu(x,y)\mu(y)=\sum_{y\in \Omega} \abs{W(y)}G_\mu(x,y)\mu(y)\asymp G_\mu(x, o)\sum_{y\in \Omega} \abs{W(y)}\mu(y).$$ By Barlow \cite{barlow1} Theorem 1.31 we have that $G_\mu(x,o)\leq G_\mu(o,o)<+\infty$ for all $x\in V$. Therefore we have that $$\sup_{x\notin \Omega\cup\partial \Omega}\sum_{y\in V}\abs{W(y)}G_\mu(x,y)\mu(y) \preceq G_\mu(o,o)\sum_{y\in \Omega} \abs{W(y)}\mu(y)<+\infty$$ as desired.\qed

\begin{lemma}\label{smalls}
Let $(V, \mu)$ be a transient weighted graph and assume that $W$ is Green-bounded on $V$. If $\norm{W}_\mu<\frac{1}{2}$, then $\Delta_\mu+W$ has a profile $h>0$ such that $h\asymp 1$.
\end{lemma}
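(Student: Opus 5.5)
We construct the profile as a perturbation of the constant function $1$, which is $\Delta_\mu$-harmonic. We look for $h = 1 - u$, where $u$ solves $\Delta_\mu u = W h$; then $(\Delta_\mu + W)h = -\Delta_\mu u + Wh = 0$. Inverting $\Delta_\mu$ with the Green operator $\mathcal{G}f(x)=\sum_{y\in V} G_\mu(x,y)f(y)\mu(y)$, this becomes the fixed-point equation
\[ h = 1 - \mathcal{G}(Wh), \qquad \text{i.e.}\qquad h=\sum_{k=0}^\infty (-\mathcal{G}W)^k 1 , \]
where $\mathcal{G}W$ denotes the operator $f\mapsto \mathcal{G}(Wf)$.

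First we check that $T:=\mathcal{G}W$ is bounded on $\ell^\infty(V)$ with operator norm at most $\norm{W}_\mu$. Indeed,
\[ \abs{Tf(x)} \le \sum_y G_\mu(x,y)\abs{W(y)}\mu(y)\,\norm{f}_\infty \le \norm{W}_\mu\norm{f}_\infty . \]
Since $\norm{W}_\mu<\frac12<1$, the Neumann series converges in $\ell^\infty$, so $h=(I+T)^{-1}1$ is well defined. It satisfies
\[ \norm{h-1}_\infty\le \sum_{k\ge1}\norm{W}_\mu^k=\frac{\norm{W}_\mu}{1-\norm{W}_\mu}<1 . \]
The hypothesis $\norm{W}_\mu<\frac12$ gives $\frac{\norm{W}_\mu}{1-\norm{W}_\mu}=:\delta<1$, so $1-\delta\le h\le 1+\delta$, hence $h>0$ and $h\asymp 1$. (In fact any $\norm{W}_\mu<1/2$ is what makes $\delta<1$.)

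Next we verify that $h$ solves the equation. The key step is to justify $\Delta_\mu \mathcal{G}f = f$ for bounded $f$ with $\mathcal{G}\abs{f}<\infty$, here $f=Wh$. We compute $K_\mu \mathcal{G}f(x)=\sum_{n\ge1}\sum_y p_\mu(x,y,n)f(y)\mu(y)$. Interchanging the sums is legitimate by Fubini, since $\sum_y G_\mu(x,y)\abs{W(y)}\abs{h(y)}\mu(y)\le \norm{W}_\mu\norm{h}_\infty<\infty$. Hence
\[ \mathcal{G}f - K_\mu\mathcal{G}f = \sum_y p_\mu(x,y,0)f(y)\mu(y)=f(x), \]
using $p_\mu(x,y,0)\mu(y)=1_{\{x\}}(y)$. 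This is the discrete analogue of $\Delta_\mu G_\mu(\cdot,y)=\delta_y/\mu(y)$ noted earlier. Then $\Delta_\mu h = -\Delta_\mu\mathcal{G}(Wh) = -Wh$, so $(\Delta_\mu+W)h=0$.

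The main obstacle is this interchange and the absolute convergence of the Green-operator sums. Green-boundedness is exactly the hypothesis that handles it, and the Fubini argument above suffices.
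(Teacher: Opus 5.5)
Your proposal is correct and follows essentially the same route as the paper: a Neumann series for $(I+T_W)^{-1}1$ on $L^\infty$, the two-sided bound $1-\frac{\norm{W}_\mu}{1-\norm{W}_\mu}\le h\le \frac{1}{1-\norm{W}_\mu}$, and the identity $h=1-\mathcal{G}(Wh)$. The one addition is that you explicitly justify $\Delta_\mu\mathcal{G}f=f$ by a Fubini interchange based on Green-boundedness; the paper leaves this step implicit.
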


\noindent\textbf{Proof:} Define a bounded operator $\fct{T_W}{L^\infty(V, d\mu)}{L^\infty(V, d\mu)}$ by $$T_Wf(x)=\sum_{y\in V}W(y)G_\mu(x,y)f(y)\mu(y).$$ Note that $\norm{T_Wf}_\infty\leq \norm{W}_\mu\norm{f}_\infty$ for all $f\in L^\infty(V, d\mu)$. In particular, if $\norm{W}_\mu<1$, then $I+T_W$ is invertible by a Neumann series: $$(I+T_W)^{-1}=\sum_{k=0}^\infty (-T_W)^k.$$ If $h=(I+T_W)^{-1}1=\sum_{k=0}^\infty (-T_W)^k 1$, then $$\abs{h}\leq \sum_{k=0}^\infty \norm{W}_\mu^k<+\infty$$ and $$\abs{h}\geq 1-\sum_{k=1}^\infty \norm{W}_\mu^k=1-\frac{\norm{W}_\mu}{1-\norm{W}_\mu}>0.$$ Thus $h>0$ with $h\asymp 1$. We also have that $$h(x)=1-\sum_{y\in V} W(y)G_\mu(x,y)h(y)\mu(y)$$  which implies $\Delta_\mu h= -Wh$ or $$(\Delta_\mu +W)h=0.$$\qed

\section{Doob's $h$-transform}\label{htrans}

Let $(V, \mu)$ be a weighted graph and let $W$ be a graph potential on $(V, \mu)$. Assume $h>0$ is a function on $V$. Define new weights by $\nu(x)=h(x)^2\mu(x)$ for $x\in V$ and $\nu(x,y)=\nu_{xy}=h(x)h(y)\mu_{xy}$ for $x\neq y$ in $V$. This may or may not define a new weighted graph $(V, \nu)$ depending on the function $h$.\\

\noindent\textbf{Notation:} With $\nu$ defined in terms of $\mu$ and $h$ as above, we write $\nu=h^2\mu$ to denote the new $h$-transformed weights on $V$. This notation has been chosen to mimic that used for $h$-transforms on manifolds, but we caution that this has meaning for not only the vertices but also the edges of the weighted graph.\\

\begin{lemma} Let $(V, \mu)$ be a weighted graph and let $W$ be a graph potential on $(V, \mu).$ Let $h>0$ satisfy $(\Delta_\mu+W)h=0$. If $\nu=h^2\mu$, then $(V, \nu)$ is a weighted graph and $$K_\nu(x,y)=\frac{h(y)}{h(x)}K_\mu^W(x,y)$$ for all $x,y\in V$. In particular, $(V, \nu)$ is uniformly lazy and has controlled weights.

\end{lemma}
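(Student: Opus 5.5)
The plan is to verify directly the three defining properties of a weighted graph for $\nu=h^2\mu$. The key computation is the diagonal weight $\nu_{xx}$, which I will read off from the equation $(\Delta_\mu+W)h=0$. The kernel formula then follows by division, and laziness and controlled weights follow from the graph-potential condition and the local Harnack lemma respectively.

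\textbf{Step 1: $(V,\nu)$ is a weighted graph.} Positivity $\nu(x)=h(x)^2\mu(x)>0$ holds because $h>0$. Symmetry $\nu_{xy}=h(x)h(y)\mu_{xy}=\nu_{yx}$ is immediate. The edge set is unchanged, since $\nu_{xy}>0$ if and only if $\mu_{xy}>0$. As in the remark after the definition of a weighted graph, set
$$\nu_{xx}=\nu(x)-\sum_{y\sim x}\nu_{xy},$$
so the row-sum condition holds automatically. It remains to show $\nu_{xx}\geq 0$, and this is where the equation is used. Writing $(\Delta_\mu+W)h(x)=0$ as
$$\sum_{y\sim x}K_\mu(x,y)h(y)=\big(1-K_\mu(x,x)+W(x)\big)h(x),$$
and multiplying by $h(x)\mu(x)$, gives
$$\sum_{y\sim x}\nu_{xy}=h(x)^2\mu(x)\big(1-K_\mu(x,x)+W(x)\big).$$
Therefore
$$\nu_{xx}=h(x)^2\mu(x)\big(K_\mu(x,x)-W(x)\big)\geq \epsilon\,\nu(x)>0,$$
by the definition of a graph potential.

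\textbf{Step 2: the kernel formula.} For $x\neq y$ we have
$$K_\nu(x,y)=\frac{h(x)h(y)\mu_{xy}}{h(x)^2\mu(x)}=\frac{h(y)}{h(x)}K_\mu(x,y)=\frac{h(y)}{h(x)}K_\mu^W(x,y).$$
For $x=y$, Step 1 gives $K_\nu(x,x)=\nu_{xx}/\nu(x)=K_\mu(x,x)-W(x)=K_\mu^W(x,x)$. This matches the claimed formula, since the factor $h(x)/h(x)$ equals $1$.

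\textbf{Step 3: uniform laziness and controlled weights.} Uniform laziness is immediate from Step 2, since $K_\nu(x,x)=K_\mu^W(x,x)\geq\epsilon$. For controlled weights, take $y\sim x$. We have $K_\nu(x,y)=\frac{h(y)}{h(x)}K_\mu(x,y)$ and $K_\mu(x,y)\geq 1/C_1$ by the controlled weights of $(V,\mu)$. It remains to bound $h(y)/h(x)$ below uniformly. Apply the local Harnack lemma (the one preceding Lemma \ref{localharnackfull}) at the vertex $y$, where $(\Delta_\mu+W)h(y)=0$ and $h\geq0$, and use its neighbor $x\sim y$. This gives $h(x)\leq 2C_1 h(y)$, and hence
$$K_\nu(x,y)\geq \frac{1}{2C_1^2}.$$
This constant is uniform in $x$ and $y$, so $(V,\nu)$ has controlled weights.

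\textbf{Main obstacle.} The only substantive point is the nonnegativity of $\nu_{xx}$ in Step 1, that is, recognizing that the harmonic equation converts the row sums into the factor $K_\mu(x,x)-W(x)$. The graph-potential hypothesis is exactly what makes this factor at least $\epsilon$. The remaining steps are routine bookkeeping together with one application of the local Harnack lemma.
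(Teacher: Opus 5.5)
Your proof is correct and follows essentially the same route as the paper: the equation $(\Delta_\mu+W)h=0$ gives $\nu_{xx}/\nu(x)=K_\mu(x,x)-W(x)=K_\mu^W(x,x)\geq\epsilon$, which yields uniform laziness (and the nonnegativity of $\nu_{xx}$, which you make explicit). The local Harnack inequality for $h$, combined with the controlled weights of $(V,\mu)$, gives controlled weights for $(V,\nu)$, and the kernel formula is a direct computation in both treatments.
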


\noindent\textbf{Proof:} First we must show that $(V, \nu)$ is uniformly lazy, i.e. there exists $c>0$ such that for all $x\in V,$ $$1-\sum_{y\sim x}\frac{\nu_{xy}}{\nu(x)}\geq c.$$ Since $(\Delta_\mu+W)h=0$ we have that $$h(x)-\left(\sum_{y\simeq x}K_\mu(x,y)h(y)\right)+W(x)h(x)=0.$$

We have $$1-\sum_{y\sim x} \frac{\nu_{xy}}{\nu(x)}=1-\sum_{y\sim x}\frac{h(y)\mu_{xy}}{h(x)\mu(x)}$$ $$=\frac{1}{h(x)}\left(h(x)-\sum_{y\simeq x}K_\mu(x,y)h(y)\right)+K_\mu(x,x)$$ $$=\frac{1}{h(x)}\left(\Delta_\mu h(x)\right)+K_\mu(x,x)=-W(x)+K_\mu(x,x)=K_\mu^W(x,x).$$

Since $W$ is a graph potential, there exists $c>0$ with $-W(x)+K_\mu(x,x)\geq c$. Therefore the weighted graph $(V, \nu)$ is uniformly lazy. The fact that $(V, \nu)$ has controlled weights follows from the Harnack inequality applied to $h$. Namely, if $C_1>0$ such that $h(x)\leq  C_1h(y)$ for all $x\sim y$ and $C_2>0$ is such that $$\frac{\mu_{xy}}{\mu(x)}\geq \frac{1}{C_2}$$ for $x\sim y$, then for $x\sim y$ we have that $$\frac{\nu_{xy}}{\nu(x)}=\frac{h(y)\mu_{xy}}{h(x)\mu(x)}\geq \frac{1}{C_1C_2}.$$ Thus $(V, \nu)$ has controlled weights. Furthermore, we have the identity $$K_\nu(x,y)=\frac{h(y)}{h(x)}K_\mu^W(x,y)$$ which is valid for all $x,y\in V$.\qed

\begin{lemma}\label{classicalh}
With profile $h>0$ for $\Delta_\mu+W$ as above, and weights $\mu=h^2\nu$, we have that for $\fct{f}{V}{\Real}$, $$\Delta_\nu(f)=\frac{1}{h}(\Delta_\mu+W)(hf)$$ and for any $x,y\in V$, $n\in \N$, $$p_\mu^W(x,y,n)=h(x)h(y)p_\nu(x,y,n).$$
\end{lemma}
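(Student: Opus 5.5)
Note the statement writes ``$\mu=h^2\nu$'', but the setup of this section defines $\nu=h^2\mu$; I read it as the latter throughout.

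The plan is a direct computation from the identity $K_\nu(x,y)=\frac{h(y)}{h(x)}K_\mu^W(x,y)$ of the previous lemma, together with the symmetry relations $\mu(x)K_\mu(x,y)=\mu_{xy}$ and $\nu(x)=h(x)^2\mu(x)$.

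\textbf{Step 1: the operator identity.} For $f\colon V\to\Real$ we compute
\[
K_\nu f(x)=\sum_y \frac{h(y)}{h(x)}K_\mu^W(x,y)f(y)=\frac{1}{h(x)}K_\mu^W(hf)(x).
\]
Using $I-K_\mu^W=\Delta_\mu+W$, this gives
\[
\Delta_\nu f=f-\frac{1}{h}K_\mu^W(hf)=\frac{1}{h}\bigl(hf-K_\mu^W(hf)\bigr)=\frac{1}{h}(\Delta_\mu+W)(hf).
\]
This step uses only the kernel identity; the equation $(\Delta_\mu+W)h=0$ has already been spent in the previous lemma to make $K_\nu$ a Markov kernel.

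\textbf{Step 2: iterated kernels.} We show by induction on $n$ that
\[
K_{\nu,n}(x,y)=\frac{h(y)}{h(x)}K^W_{\mu,n}(x,y).
\]
The case $n=0$ holds because both sides equal $1_{\set{x}}(y)$. For the inductive step, use the recursion $K_{\nu,n+1}(x,y)=\sum_z K_{\nu,n}(x,z)K_\nu(z,y)$. The ratios telescope: $\frac{h(z)}{h(x)}\cdot\frac{h(y)}{h(z)}=\frac{h(y)}{h(x)}$. The remaining sum $\sum_z K^W_{\mu,n}(x,z)K^W_\mu(z,y)$ is exactly the recursion defining $K^W_{\mu,n+1}$.

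\textbf{Step 3: heat kernels.} Dividing by $\nu(y)=h(y)^2\mu(y)$ gives
\[
p_\nu(x,y,n)=\frac{K_{\nu,n}(x,y)}{\nu(y)}=\frac{1}{h(x)h(y)}\cdot\frac{K^W_{\mu,n}(x,y)}{\mu(y)}=\frac{p^W_\mu(x,y,n)}{h(x)h(y)}.
\]
Rearranging yields $p^W_\mu(x,y,n)=h(x)h(y)p_\nu(x,y,n)$, as claimed.

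There is no real obstacle. The only points needing care are the bookkeeping of the diagonal terms and the fact that $\nu$ denotes both vertex and edge weights. The diagonal entry $K_\nu(x,x)=K^W_\mu(x,x)$ is already covered by the previous lemma's identity, which holds for all $x,y$ including $x=y$. This is exactly what makes the induction go through uniformly.
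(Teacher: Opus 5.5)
Your proposal is correct and follows the paper's own proof. Both arguments derive the operator identity from $K_\nu(x,y)=\frac{h(y)}{h(x)}K_\mu^W(x,y)$, prove the iterated-kernel identity by induction, and then divide by $\nu(y)=h(y)^2\mu(y)$. You are also right that ``$\mu=h^2\nu$'' in the statement is a typo for $\nu=h^2\mu$, which is how the paper's proof uses it.
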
 

\noindent\textbf{Proof:} For $\fct{f}{V}{\Real}$, it is straightforward to verify that $$\Delta_\nu(f)=\frac{1}{h}(\Delta_\mu+W)(hf).$$

Next, a straightforward induction shows that $$K_{\nu, n}(x,y)=\frac{h(y)}{h(x)}K_{\mu, n}^W(x,y)$$ for all $x,y\in V$. Therefore the heat kernel of $\Delta_\nu$ is given by $$p_\nu(x,y,n)=\frac{K_{\nu, n}(x,y)}{\nu(y)}=\frac{p_\mu^W(x,y,n)}{h(x)h(y)}.$$\qed

\subsection{Iterated $h$-transform}

In this section we discuss the iterated $h$-transform. Our presentation is adapted from Graves-McCleary and Saloff-Coste \cite{gravesmccleary1}, which treats manifolds instead of graphs.\\

\begin{lemma}\label{graphpotentials}
Let $(V, \mu)$ be a weighted graph and let $\fct{W_1, W_2}{V}{\Real}$ be such that $W_1$ and $W_1+W_2$ are graph potentials on $(V, \mu)$. Let $h>0$ be such that $(\Delta_\mu+W)h=0$ and let $\nu=h^2\mu$. Then $W_2$ is a graph potential on $(V, \nu)$.
\end{lemma}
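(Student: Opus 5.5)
The idea is to compute the diagonal entries of the new kernel directly and show that the graph-potential condition for $W_2$ on $(V,\nu)$ is exactly the graph-potential condition for $W_1+W_2$ on $(V,\mu)$. Here $h$ is the profile for $\Delta_\mu+W_1$ (the $W$ in the statement should be read as $W_1$), and $\nu=h^2\mu$. By the preceding lemma on the $h$-transform, $(V,\nu)$ is a weighted graph and
$$K_\nu(x,y)=\frac{h(y)}{h(x)}K_\mu^{W_1}(x,y).$$
Taking $y=x$ gives
$$K_\nu(x,x)=K_\mu^{W_1}(x,x)=K_\mu(x,x)-W_1(x).$$
This identity also appeared in the proof of that lemma, where it was shown that $1-\sum_{y\sim x}\nu_{xy}/\nu(x)=K_\mu(x,x)-W_1(x)$.

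Next I use the hypothesis that $W_1+W_2$ is a graph potential on $(V,\mu)$. This gives an $\epsilon>0$ such that for all $x\in V$
$$W_1(x)+W_2(x)\leq K_\mu(x,x)-\epsilon.$$
Rearranging,
$$W_2(x)\leq K_\mu(x,x)-W_1(x)-\epsilon=K_\nu(x,x)-\epsilon,$$
which is precisely Definition \ref{graphpotentialdef} for $W_2$ on $(V,\nu)$, with the same $\epsilon$.

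The only point needing any care is checking that $\mu_{xx}$-type conventions agree. The diagonal weight $\nu_{xx}$ is not given by $h(x)^2\mu_{xx}$ but is defined as $\nu(x)-\sum_{y\sim x}\nu_{xy}$, per the convention stated after the definition of a weighted graph. So $K_\nu(x,x)$ must be computed as $1-\sum_{y\sim x}K_\nu(x,y)$, and this is what the previous lemma did, using $(\Delta_\mu+W_1)h=0$. Once that is recalled, the argument is a one-line inequality, so I don't expect a real obstacle. The graph-potential property of $W_1$ is used only through the previous lemma, to guarantee that $(V,\nu)$ is a genuine (uniformly lazy) weighted graph, so that the statement makes sense.
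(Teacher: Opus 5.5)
Your proposal is correct and is essentially the paper's proof. You use the identity $K_\nu(x,x)=K_\mu(x,x)-W_1(x)$ from the preceding $h$-transform lemma and rearrange $W_1+W_2\leq K_\mu(x,x)-\epsilon$, with the same $\epsilon$; your remarks that $W$ should be read as $W_1$ and about the diagonal-weight convention are both accurate.
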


\noindent\textbf{Proof:} There exists $\epsilon>0$ such that $W_1(x)+W_2(x)\leq K_\mu(x,x)-\epsilon$ for all $x\in V$. We then have that $W_2(x)\leq K_\mu(x,x)-W_1(x)-\epsilon=K_\nu(x,x)-\epsilon$ for all $x\in V$, proving that $W_2$ is a graph potential on $(V, \nu)$.\qed\\

\begin{lemma}\label{iter} Let $(V, \mu)$ be a weighted graph and let $\fct{W_1, W_2}{V}{\Real}$ be such that $W_1$ and $W_1+W_2$ are graph potentials on $(V, \mu)$. Let $g,h>0$ be positive functions on $V$ satisfying $(\Delta_\mu+W_1)h=0$ and $(\Delta_\nu+W_2)g=0$, where $\nu=h^2\mu$. Then letting $\theta=g^2h^2\mu$, we have that \begin{equation}
p^{W_1+W_2}_\mu(x,y, n)=g(x)h(x)g(y)h(y)p_\theta(x,y,n)
\end{equation} for all $t>0$ and $x,y\in M$.
\end{lemma}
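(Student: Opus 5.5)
The plan is to apply the single $h$-transform identity of Lemma \ref{classicalh} twice, composing the two transforms, and then to check that the result is the $(gh)$-transform of $\mu$ with respect to the potential $W_1+W_2$.

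First, since $W_1$ is a graph potential on $(V,\mu)$ and $h>0$ solves $(\Delta_\mu+W_1)h=0$, the lemma preceding Lemma \ref{classicalh} shows that $(V,\nu)$ with $\nu=h^2\mu$ is a weighted graph (uniformly lazy, controlled weights). Lemma \ref{classicalh} then gives
\[
K_\nu(x,y)=\frac{h(y)}{h(x)}K^{W_1}_\mu(x,y).
\]
By Lemma \ref{graphpotentials}, $W_2$ is a graph potential on $(V,\nu)$, since $K_\nu(x,x)=K_\mu(x,x)-W_1(x)$. Because $g>0$ solves $(\Delta_\nu+W_2)g=0$, the same two lemmas applied to $(V,\nu)$ show that $(V,\theta')$ with $\theta'=g^2\nu$ is a weighted graph and that
\[
p^{W_2}_\nu(x,y,n)=g(x)g(y)\,p_{\theta'}(x,y,n).
\]

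Next I will check that $\theta'=\theta$. On vertices, $\theta'(x)=g(x)^2h(x)^2\mu(x)=\theta(x)$. On edges $x\ne y$, $\theta'_{xy}=g(x)g(y)\nu_{xy}=g(x)g(y)h(x)h(y)\mu_{xy}=\theta_{xy}$. Since diagonal weights are determined by $\theta_{xx}=\theta(x)-\sum_{y\sim x}\theta_{xy}$, the two weight functions agree. (One could instead note directly that $gh$ is a profile for $\Delta_\mu+W_1+W_2$ and apply Lemma \ref{classicalh} once, but the two-step route avoids recomputing that identity.)

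The remaining step, which I expect to be the main point requiring care, is to relate $p^{W_2}_\nu$ to $p^{W_1+W_2}_\mu$. Here the kernels multiply correctly at the one-step level. For $x\neq y$,
\[
K^{W_2}_\nu(x,y)=K_\nu(x,y)=\frac{h(y)}{h(x)}K_\mu(x,y)=\frac{h(y)}{h(x)}K^{W_1+W_2}_\mu(x,y).
\]
For $x=y$,
\[
K^{W_2}_\nu(x,x)=K_\nu(x,x)-W_2(x)=K_\mu(x,x)-W_1(x)-W_2(x)=K^{W_1+W_2}_\mu(x,x).
\]
So $K^{W_2}_\nu(x,y)=\frac{h(y)}{h(x)}K^{W_1+W_2}_\mu(x,y)$ for all $x,y$. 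Inducting on $n$ through the recursion defining $K_{n}$, the intermediate factors $h(z)$ telescope, and
\[
K^{W_2}_{\nu,n}(x,y)=\frac{h(y)}{h(x)}K^{W_1+W_2}_{\mu,n}(x,y).
\]
Dividing by $\nu(y)=h(y)^2\mu(y)$ gives
\[
p^{W_2}_\nu(x,y,n)=\frac{p^{W_1+W_2}_\mu(x,y,n)}{h(x)h(y)}.
\]
Combining this with the previous display yields
\[
p^{W_1+W_2}_\mu(x,y,n)=g(x)h(x)g(y)h(y)\,p_\theta(x,y,n)
\]
for all $x,y\in V$ and $n\in\N$.
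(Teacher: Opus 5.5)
Your proof is correct, but it is organized differently from the paper's. The paper never composes two transforms. It uses the operator identity $\Delta_\nu f=h^{-1}(\Delta_\mu+W_1)(hf)$ of Lemma \ref{classicalh} to get $(\Delta_\mu+W_1+W_2)(gh)=h(\Delta_\nu+W_2)g=0$, so $gh$ is a profile for $\Delta_\mu+W_1+W_2$. Since $W_1+W_2$ is a graph potential on $(V,\mu)$, one application of Lemma \ref{classicalh} with profile $gh$ then gives the identity with $\theta=(gh)^2\mu$ at once, with nothing to compare. This is the route you mention in your parenthetical.

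You instead stay at the level of kernels. You apply Lemma \ref{classicalh} on $(V,\nu)$ and then prove the residual-potential identity $K^{W_2}_{\nu,n}(x,y)=\frac{h(y)}{h(x)}K^{W_1+W_2}_{\mu,n}(x,y)$, which extends the kernel identity of the lemma preceding Lemma \ref{classicalh} (that identity is the case $W_2=0$).

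The paper's route is shorter and isolates the fact reused later, namely that products of successive profiles are profiles (Proposition \ref{compactstep}, Lemma \ref{compactstep2}, Theorem \ref{maintheorem}). Yours costs an extra induction, but it gives two by-products. First, you obtain $p^{W_2}_\nu(x,y,n)=p^{W_1+W_2}_\mu(x,y,n)/(h(x)h(y))$, which after summing over $n$ is exactly the equivalence of (a) and (b) in Lemma \ref{itergreen}. Second, you verify that the two descriptions $g^2h^2\mu$ and $g^2\nu$ of $\theta$, used in Lemmas \ref{iter} and \ref{itergreen} respectively, coincide.
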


\noindent\textbf{Proof:} Note that \begin{equation}
(\Delta_\mu+W_1+W_2)(gh)=h\Delta_\nu g+W_2gh=h(\Delta_\nu +W_2)g=0.
\end{equation} Thus $gh$ is a profile for $\Delta_\mu+W_1+W_2$. The conclusion then follows from Lemma \ref{classicalh}.\qed\\

\begin{lemma}\label{itergreen}
Let $(V, \mu)$ be a weighted graph and let $\fct{W_1, W_2}{V}{\Real}$ be such that $W_1$ and $W_1+W_2$ are graph potentials on $(V, \mu)$. Let $h>0$ satsify $(\Delta_\mu+W_1)h=0$ and let $\nu=h^2\mu$. Let $g>0$ satisfy $(\Delta_\nu+W_2)g=0$ and let $\theta=g^2\nu$. Then the following are equivalent:

\noindent(a) $\Delta_\mu+W_1+W_2$ is subcritical.

\noindent(b) $\Delta_\nu+W_2$ is subcritical.

\noindent(c) $(M, \theta, K_\theta)$ is transient.
\end{lemma}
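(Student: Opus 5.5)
The proof will rest entirely on the heat kernel identities of Lemma \ref{classicalh} and Lemma \ref{iter}, together with the definitions of subcriticality and transience. These reduce each Green's function to a positive multiple of the others. The key observation is that the multiplying factors are fixed positive numbers once $x,y$ are fixed. Since each of (a), (b), (c) is a statement that a series $\sum_n p(x,y,n)$ converges for some (any) pair $x,y$, we only need to compare these series term by term.

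\emph{Step 1: the graphs involved.} By the lemma preceding Lemma \ref{classicalh}, $(V,\nu)$ is a weighted graph. Lemma \ref{graphpotentials} shows that $W_2$ is a graph potential on $(V,\nu)$. Applying the same $h$-transform lemma to $g$ on $(V,\nu)$ then shows that $(V,\theta)$ is a weighted graph, so $p_\theta$ and transience of $(V,\theta)$ make sense. Here $M$ in (c) is read as $V$.

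\emph{Step 2: (b)$\Leftrightarrow$(c).} Apply Lemma \ref{classicalh} on the weighted graph $(V,\nu)$ with potential $W_2$ and profile $g$. This gives, for all $x,y,n$,
$$p_\nu^{W_2}(x,y,n)=g(x)g(y)\,p_\theta(x,y,n).$$
Summing over $n\ge 0$ gives $G_\nu^{W_2}(x,y)=g(x)g(y)G_\theta(x,y)$, with both sides possibly infinite. Because $g(x)g(y)\in(0,\infty)$, one side is finite exactly when the other is. This is precisely the equivalence of subcriticality of $\Delta_\nu+W_2$ with transience of $(V,\theta)$.

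\emph{Step 3: (a)$\Leftrightarrow$(c).} Lemma \ref{iter} gives
$$p_\mu^{W_1+W_2}(x,y,n)=g(x)h(x)g(y)h(y)\,p_\theta(x,y,n).$$
Summing over $n$ in the same way shows that $G_\mu^{W_1+W_2}(x,y)<\infty$ if and only if $G_\theta(x,y)<\infty$.

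\emph{A point to watch.} The definitions use ``some (any) $x,y$'', so for transience of $(V,\theta)$ we must check that finiteness at one pair implies finiteness at all pairs. For connected graphs this is standard: use $p(x,y,n+m)\ge p(x,z,n)\,K_m(z,y)$-type comparisons, or note that $G_\theta(\cdot,y)$ is $\theta$-superharmonic and apply Lemma \ref{localharnackfull}. Apart from this the argument is routine, so there is no real obstacle.
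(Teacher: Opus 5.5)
Your proof is correct and matches the paper's, which consists only of the remark that the lemma follows from summing the identity of Lemma \ref{iter} over $n\in\N$. Your separate use of Lemma \ref{classicalh} on $(V,\nu)$ for (b)$\Leftrightarrow$(c), and your Chapman--Kolmogorov justification of the ``some (any)'' clause (valid because a graph potential makes every entry of $K_\mu^W$ nonnegative), spell out details the paper leaves implicit.
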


\noindent\textbf{Proof:} Follows directly from summing Lemma \ref{iter} over $n\in \N$.\qed\\

\begin{lemma}\label{unique}
Let $(V, \mu)$ be a weighted graph satisfying HK($\beta$). Let $\fct{W}{V}{\Real}$ be a graph potential such that $\Delta_\mu+W$ admits a profile $h>0$ with $h\asymp 1$. Let $g>0$ be another profile of $\Delta_\mu+W$. Then $g/h$ is constant.
\end{lemma}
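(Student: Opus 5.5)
We reduce to Liouville's theorem via the $h$-transform. Let $\nu=h^2\mu$. By the lemma on Doob's $h$-transform, $(V,\nu)$ is a weighted graph: uniformly lazy, with controlled weights. Since $h\asymp 1$, the weights satisfy $\nu(x)=h(x)^2\mu(x)\asymp \mu(x)$ and $\nu_{xy}=h(x)h(y)\mu_{xy}\asymp \mu_{xy}$, so $\nu\asymp\mu$. By Theorem \ref{bassbarlowthm}, $(V,\nu)$ therefore satisfies HK($\beta$).

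Next set $f=g/h>0$. Lemma \ref{classicalh} gives
$$\Delta_\nu f=\frac{1}{h}(\Delta_\mu+W)(hf)=\frac{1}{h}(\Delta_\mu+W)g=0,$$
so $f$ is a positive $\Delta_\nu$-harmonic function on all of $V$. If $(V,\nu)$ satisfies (EHI), then Lemma \ref{liouville} shows that $f$ is constant, which is exactly the claim.

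It remains to show that HK($\beta$) implies (EHI) on $(V,\nu)$, and this is the main obstacle. I would first cite the standard fact that two-sided sub-Gaussian heat kernel bounds imply the parabolic Harnack inequality, and hence the elliptic one. Barlow--Bass show that HK($\beta$) is equivalent to volume doubling, a Poincar\'e inequality and a cutoff Sobolev inequality, and these in turn give the parabolic Harnack inequality PHI($\beta$), which contains EHI.

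If a self-contained route is wanted, there is a more direct alternative that avoids EHI. The near-diagonal lower bound in HK($\beta$) gives, for a positive harmonic $f$,
$$f(x)=\sum_y K_{\nu,n}(x,y)f(y)\ge c\,\frac{\nu(B(x,n^{1/\beta}))}{V_\nu(x,n^{1/\beta})}\cdot\inf_{B(x,n^{1/\beta})}f\cdot(\text{const}).$$
Comparing $P_n f(x)$ and $P_n f(x')$ for nearby $x,x'$ with $n\gg d(x,x')^\beta$ then yields a Harnack-type comparability. I would nonetheless rely on the citation, since it is standard in the literature.
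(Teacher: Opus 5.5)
Your proof is correct and follows the paper's argument: pass to $\nu=h^2\mu$, use $h\asymp 1$ and Theorem \ref{bassbarlowthm} to get HK($\beta$) and hence (EHI) for $(V,\nu)$, note via Lemma \ref{classicalh} that $g/h$ is a positive $\Delta_\nu$-harmonic function, and conclude with Lemma \ref{liouville}. Your optional ``self-contained'' alternative is not needed, and as written it only gives the trivial bound $f(x)\ge c\inf_{B(x,n^{1/\beta})}f$ (since $x$ lies in that ball), so it cannot replace the cited fact that HK($\beta$) implies the parabolic, hence elliptic, Harnack inequality.
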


\noindent\textbf{Proof:} Let $\nu=h^2\mu$. Then $(V, \nu)$ satisfies HK($\beta$) and therefore (EHI). We have that $g/h$ is positive and $\Delta_\nu$-harmonic. By Liouville's theorem, $g/h$ is constant.\qed

\begin{proposition}\label{compactstep}
Let $(V, \mu)$ be a transient weighted graph satifying HK($\beta$). Let $W$ be a graph potential with finite support such that $\Delta_\mu+tW$ is subcritical for all $0\leq t\leq 1$. Then $\Delta_\mu+W$ has a profile $h$ with $$h\asymp 1.$$
\end{proposition}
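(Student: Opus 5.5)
The plan is a continuity argument in $t$, using the iterated $h$-transform in small increments. Because $W$ has finite support, Lemma \ref{griglemma} shows that $W$ is Green-bounded on $(V,\mu)$. The difficulty is that $\norm{W}_\mu$ may be large, so Lemma \ref{smalls} cannot be applied directly. Instead, I would write $W=\sum_{k=1}^N \frac{1}{N}W$ and perturb step by step, at each stage working on the $h$-transformed graph.

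First I would set up an induction. Let $t_k=k/N$. Suppose that for some $k<N$ we have a profile $h_k\asymp 1$ of $\Delta_\mu+t_kW$. Let $\nu_k=h_k^2\mu$. Since $h_k\asymp 1$, we have $\nu_k\asymp\mu$, so Theorem \ref{bassbarlowthm} shows that $(V,\nu_k)$ satisfies HK($\beta$). By Lemma \ref{itergreen}, applied with $W_1=t_kW$ and $W_2=0$, together with subcriticality of $\Delta_\mu+t_kW$, the graph $(V,\nu_k)$ is transient. Lemma \ref{classicalh} gives $G_{\nu_k}(x,y)=G^{t_kW}_\mu(x,y)/(h_k(x)h_k(y))$. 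Because $\nu_k\asymp\mu$ and both graphs satisfy HK($\beta$), Lemma \ref{greenest0} gives $G_{\nu_k}\asymp G_\mu$, with constants depending on $k$. Hence $\frac1N W$ is Green-bounded on $(V,\nu_k)$, and
$$\norm{\tfrac1N W}_{\nu_k}\le \frac{C_k}{N}\norm{W}_\mu .$$
Lemma \ref{graphpotentials} shows that $\frac1NW$ is a graph potential on $(V,\nu_k)$, since $t_kW$ and $t_{k+1}W$ are graph potentials on $(V,\mu)$. If $\norm{\frac1N W}_{\nu_k}<\frac12$, then Lemma \ref{smalls} produces a profile $g\asymp 1$ of $\Delta_{\nu_k}+\frac1N W$. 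By the computation in the proof of Lemma \ref{iter}, $h_{k+1}=gh_k\asymp 1$ is then a profile of $\Delta_\mu+t_{k+1}W$.

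The main obstacle is that the constants $C_k$ could grow with $k$, so a single fixed $N$ might not work. I would handle this with a compactness argument on $[0,1]$ rather than a fixed subdivision. For each $s\in[0,1]$, subcriticality of $\Delta_\mu+sW$ gives, by the construction above if a profile $h_s\asymp1$ exists, a transient graph $(V,h_s^2\mu)$ with HK($\beta$). Then $tW$ has small norm there for $|t|\le\delta_s$, where $\delta_s=1/(2\norm{W}_{h_s^2\mu})$. This yields profiles $\asymp1$ for all $t\in[s,s+\delta_s]$, and the set of good $t$ is open to the right. The real issue is a uniform lower bound on $\delta_s$, equivalently a uniform bound on $\norm{W}_{h_s^2\mu}=\sup_x\sum_y |W(y)|G_\mu^{sW}(x,y)h_s(y)\mu(y)/h_s(x)$.

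I would obtain this bound from subcriticality at $s$ alone, not from $h_s\asymp 1$, using the local Harnack Lemma \ref{localharnackfull} on a finite connected $\Omega\supseteq\operatorname{supp}W$. Outside $\Omega$, $h_s$ is $\Delta_\mu$-harmonic and positive. Combined with the fact that $G^{sW}_\mu(\cdot,y)$ for $y\in\Omega$ is comparable to $G_\mu(\cdot,o)$ far away, this reduces the bound to finitely many values. Alternatively, if $S=\{s:\Delta_\mu+sW$ has a profile $\asymp1\}$, I would show $S$ is closed under increasing limits. Take $s_j\uparrow s$ and normalize $h_{s_j}(o)=1$. Lemma \ref{harnackprinciple} gives a limit profile $h_s$ of $\Delta_\mu+sW$. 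By Lemma \ref{itergreen}, $(V,h_s^2\mu)$ is transient, since $\Delta_\mu+sW$ is subcritical. Outside the finite set $\Omega$, $h_s$ is $\Delta_\mu$-harmonic, so $h_s-c\,G_\mu(\cdot,o)$-type comparison and the Liouville theorem (Lemma \ref{liouville}) applied near infinity force $h_s\to$ a positive constant at infinity. This gives $h_s\asymp 1$. Starting from $0\in S$, the set $S$ is open and closed in $[0,1]$, hence $1\in S$.
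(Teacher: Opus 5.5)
Your fixed-$N$ induction and the ``open to the right'' step are fine. The closedness step, however, has a genuine gap. When you pass to the limit $h_{s_j}\to h_s$, the constants in $h_{s_j}\asymp 1$ are not uniform, so no lower bound survives. Everything then rests on the claim that harmonicity of $h_s$ outside $\Omega$ plus Liouville forces $h_s$ to tend to a positive constant.

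Lemma \ref{liouville} applies only to functions harmonic on all of $V$, so it cannot be ``applied near infinity.'' The most it gives is this: $h_s+s\sum_y G_\mu(\cdot,y)W(y)h_s(y)\mu(y)$ is globally harmonic and bounded below, hence a constant $a$. Nothing excludes $a=0$, i.e.\ $h_s$ decaying like $G_\mu(\cdot,o)$. Indeed, $G_\mu(\cdot,o)$ itself is a positive function harmonic off a finite set that tends to $0$.

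Ruling out a decaying profile is exactly where subcriticality must enter, and you give no mechanism for it. You note that $(V,h_s^2\mu)$ is transient, but turning that into a contradiction when $h_s\asymp G_\mu(\cdot,o)$ would need capacity estimates on HK($\beta$) graphs, or the theory of positive solutions of minimal growth. Neither is available here. Moreover, if your claim held, it would give $h_s\asymp1$ for every $s$ directly, making the continuity argument superfluous; the claim is the whole content of the proposition. Your other route, a uniform lower bound on $\delta_s$, is likewise only asserted. As the paper's argument shows, it is not needed.

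The missing idea is to perturb \emph{backwards} from $s$ and use uniqueness of profiles. Let $J$ be the set of good parameters, $s=\inf([0,1]\setminus J)$, and take \emph{any} profile $h$ of $\Delta_\mu+sW$. Such an $h$ exists by subcriticality, but is not yet known to be bounded.
\begin{itemize}
\item $\nu=h^2\mu$ is a transient weighted graph by Lemma \ref{itergreen}.
\item Lemma \ref{griglemma} applies to $(V,\nu)$ with no comparison between $\nu$ and $\mu$, so $W$ is Green-bounded there.
\item By Lemmas \ref{graphpotentials} and \ref{smalls}, $\Delta_\nu-\epsilon W$ has a profile $g\asymp 1$ for small $\epsilon>0$.
\item Then $gh\asymp h$ is a profile of $\Delta_\mu+(s-\epsilon)W$. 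Since $s-\epsilon\in J$, Lemma \ref{unique} makes $gh$ a constant multiple of the bounded profile there, so $h\asymp 1$ and $s\in J$.
\end{itemize}
Your forward step then gives $[s,s+\delta]\subseteq J$, contradicting the choice of $s$. This is precisely the closedness under increasing limits you wanted, obtained without taking limits of profiles or analysing behaviour at infinity.
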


\noindent\textbf{Proof:} Consider the set $J=\set{0\leq t\leq 1\colon \Delta_\mu+tW \textrm{ has a profile }h\asymp 1}$. Assume for a contradiction that $[0,1]\setminus J$ is nonempty and let $s=\inf ([0,1]\setminus J).$ By Lemma \ref{smalls}, $s>0$.\\

Let $h>0$ be a profile for $\Delta_\mu+sW$ and let $\nu=h^2\mu$. Since $\Delta_\mu+sW$ is subcritical, by Lemma \ref{itergreen}, $(V, \nu)$ is transient.\\

By Lemma \ref{griglemma}, $W$ is Green-bounded on $(M, \nu)$. Therefore by Lemma \ref{smalls} there exists $\epsilon>0$ such that $s-\epsilon>0$ and $\Delta_\nu-\epsilon W$ has a profile $g>0$ with $g\asymp 1$. It follows that $gh\asymp h$ is a profile for $\Delta_\mu+(s-\epsilon)W$. By Lemma \ref{unique}, $gh\asymp 1$ and therefore $h\asymp 1$, a contradiction. Thus $J=[0,1]$.\qed\\

The next step eliminates the hypothesis that $\Delta_\mu+tW$ is subcritical for all $0\leq t\leq 1$ and replaces it with the simpler hypothesis that $\Delta_\mu+W$ is subcritical.

\begin{lemma}\label{compactstep2}
Let $(V, \mu)$ be a transient weighted graph satifying HK($\beta$). Let $W$ be a graph potential with finite support be such that $\Delta_\mu+W$ is subcritical. Then $\Delta_\mu+W$ has a profile $h$ with $$h\asymp 1.$$
\end{lemma}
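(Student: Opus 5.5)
The plan is to reduce to Proposition \ref{compactstep}. It suffices to show that if $\Delta_\mu+W$ is subcritical, then $\Delta_\mu+tW$ is subcritical for every $0\leq t\leq 1$. The endpoint $t=0$ is the transience of $(V,\mu)$, and $t=1$ is the hypothesis. Also, $tW$ is a graph potential for each $t\in[0,1]$: where $W\leq 0$ we have $tW\leq 0\leq K_\mu(x,x)-c$ by uniform laziness, and where $W>0$ we have $tW\leq W$. We may assume $W\not\equiv 0$, since otherwise there is nothing to prove.

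\textbf{Step 1: a positive supersolution for $\Delta_\mu+tW$.} Fix $0<t<1$ and let $h>0$ be a profile for $\Delta_\mu+W$; it exists by the lemma on existence of profiles for subcritical operators. The profile equation reads $K_\mu h=(1+W)h$, and the Remark after the local Harnack lemma gives $1+W(x)>K_\mu(x,x)\geq 0$. Since $K_\mu$ is a Markov kernel (diagonal included) and $s\mapsto s^t$ is concave, Jensen gives
$$K_\mu(h^t)\leq (K_\mu h)^t=(1+W)^t h^t.$$
By Bernoulli's inequality, $(1+W)^t\leq 1+tW$, with strict inequality wherever $W(x)\neq 0$. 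Setting $u=h^t>0$, we get
$$f:=(\Delta_\mu+tW)u=(1+tW)u-K_\mu u\geq \big(1+tW-(1+W)^t\big)u\geq 0,$$
and $f(x_0)>0$ at any point $x_0$ with $W(x_0)\neq 0$.

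\textbf{Step 2: from the supersolution to a finite Green function.} Because $tW$ is a graph potential, the kernel $K^{tW}_\mu$ has nonnegative entries. Since $f=(I-K^{tW}_\mu)u$, we have $K^{tW}_{\mu,k}f=K^{tW}_{\mu,k}u-K^{tW}_{\mu,k+1}u$. Summing over $k=0,\dots,N$ telescopes to
$$\sum_{k=0}^N K^{tW}_{\mu,k}f=u-K^{tW}_{\mu,N+1}u\leq u,$$
using $K^{tW}_{\mu,N+1}u\geq 0$. Letting $N\to\infty$ gives
$$\sum_k K^{tW}_{\mu,k}(x,x_0)f(x_0)\leq u(x)<\infty.$$
Since $f(x_0)>0$, this yields $G^{tW}_\mu(x,x_0)<\infty$, so $\Delta_\mu+tW$ is subcritical by Definition \ref{subcriticaldef}. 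With subcriticality on all of $[0,1]$ in hand, Proposition \ref{compactstep} gives the profile $h\asymp 1$ for $\Delta_\mu+W$.

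\textbf{Main obstacle.} The delicate point is Step 1. Subcriticality is not monotone in $t$ in any naive way: $tW$ exceeds $W$ where $W<0$ and is below it where $W>0$, so simple comparison of kernels fails. The convexity (Jensen plus Bernoulli) argument bypasses this. It requires two things: that the diagonal term is included in the Markov kernel $K_\mu$, which is what makes Jensen applicable, and that $1+W>0$, which the Remark supplies. Strictness of Bernoulli's inequality at a point where $W\neq 0$ is exactly what upgrades a supersolution to genuine subcriticality.
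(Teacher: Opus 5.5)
Your proof is correct, but it takes a different route from the paper's.

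\textbf{The paper's route.} The paper never shows that $\Delta_\mu+tW$ is subcritical along the whole segment when $W$ changes sign. It splits $W=W_+-W_-$ and uses only sign-monotonicity: adding a nonnegative potential decreases the kernels entrywise, so subcriticality is inherited. It then proceeds in four steps:
\begin{enumerate}
\item Apply Proposition~\ref{compactstep} to $W_+$, giving a profile $h\asymp 1$.
\item Pass to $\nu=h^2\mu$, using Theorem~\ref{bassbarlowthm} to keep HK($\beta$) and Lemma~\ref{itergreen} to get subcriticality of $\Delta_\nu-W_-$.
\item Apply Proposition~\ref{compactstep} again to $-W_-$ on $(V,\nu)$; monotonicity holds there because $-tW_-\geq -W_-$. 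This gives a profile $g\asymp 1$.
\item Take $gh$ as the profile for $\Delta_\mu+W$.
\end{enumerate}

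\textbf{Your route.} You prove directly that subcriticality at $t=0$ and $t=1$ propagates to every $t\in[0,1]$. The supersolution $h^t$ comes from Jensen plus strict Bernoulli, and the telescoping bound gives
$G^{tW}_\mu(\cdot,x_0)\leq u/(f(x_0)\mu(x_0))$.
This is a discrete instance of the classical convexity of the set of subcritical potentials.

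\textbf{What each route buys.} Your argument needs only one application of Proposition~\ref{compactstep}. It avoids the second $h$-transform and its bookkeeping: checking that $-W_-$ is a graph potential on $(V,\nu)$, and that $(V,\nu)$ is transient and satisfies HK($\beta$). Your intermediate convexity statement also holds on any transient weighted graph, without HK($\beta$) or finite support. The paper's argument needs no convexity inequality, only sign-monotonicity and the iterated $h$-transform it has already built, and it reuses the same template as Theorem~\ref{maintheorem}.
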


\noindent\textbf{Proof:} Step 1: Assume $W\geq 0$. Here we have that $\Delta_\mu+tW$ is subcritical for all $0\leq t\leq 1$ so Proposition \ref{compactstep} implies the conclusion.\\

Step 2: General case. Here write $W=W_+-W_-$ where $W_+, W_-\geq 0$ are of finite support. Then by Step 1, $\Delta_\mu+W_+$ has a profile $h>0$ with $h\asymp 1$. Let $\nu=h^2\mu$ and consider the graph $(V, \nu, K_\nu)$. Since $(V, \mu, K_\mu)$ satisfies HK($\beta$), so does $(V, \nu, K_\nu)$.\\

Further, $\Delta_\nu-W_-$ is subcritical by Lemma \ref{itergreen}, and in fact since $-tW_-\geq -W_-$ for all $0\leq t\leq 1$, $\Delta_\nu-tW_-$ is subcritical for all $0\leq t\leq 1$. Therefore by Proposition \ref{compactstep}, $\Delta_\nu-W_-$ has a profile $g>0$ with $g\asymp 1$.\\

Finally, $gh\asymp 1$ is a profile for $\Delta_\mu+W$, as desired.\qed\\

\begin{theorem}\label{maintheorem}
Let $(V, \mu)$ be a transient weighted graph satisfying HK($\beta$). Let $W\in K^\infty(V, \mu)$ be a graph potential. If $\Delta_\mu+W$ is subcritical, then there exists a profile $h>0$ for $\Delta_\mu+W$ satisfying $$h\asymp 1.$$
\end{theorem}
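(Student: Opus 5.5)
The plan is to split $W$ into a finitely supported part and a small tail, and handle them by an iterated $h$-transform. Since $W\in K^\infty(V,\mu)$, I choose $R$ large so that
$$\sup_{x\in V}\sum_{y\in V\setminus B(o,R)}\abs{W(y)}G_\mu(x,y)\mu(y)<\delta,$$
where $\delta>0$ is a small constant fixed later. Set $W_1=W1_{B(o,R)}$ and $W_2=W1_{V\setminus B(o,R)}$. Then $W_1$ has finite support, and $W_1$ and $W_1+W_2=W$ are graph potentials: truncation only lowers $W$ where $W>0$, and where $W\le 0$ the bound $W_1\le K_\mu(x,x)-\epsilon$ is automatic once $\epsilon\le\inf K_\mu(x,x)$. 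This uses uniform laziness.

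\textbf{Step 1 (the tail first).} I treat the tail first, so that the compact part is handled on a graph where subcriticality can be transferred. The tail $W_2$ is Green-bounded with $\norm{W_2}_\mu<\delta<\frac12$. By Lemma \ref{smalls}, $\Delta_\mu+W_2$ has a profile $h\asymp 1$. We need $W_2$ to be a graph potential; if it is not, replace $W_2$ by its restriction and note that $\abs{W}$ is small outside a large ball. Pointwise smallness follows from the Kato bound, since $G_\mu(x,x)\mu(x)\succeq 1$ by laziness.

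Let $\nu=h^2\mu$. Since $\nu\asymp\mu$, Theorem \ref{bassbarlowthm} shows that $(V,\nu)$ satisfies HK($\beta$). By Lemma \ref{graphpotentials}, with the roles of $W_1,W_2$ swapped, $W_1$ is a graph potential on $(V,\nu)$. By Lemma \ref{itergreen}, subcriticality of $\Delta_\mu+W_2+W_1$ gives that $\Delta_\nu+W_1$ is subcritical.

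\textbf{Step 2 (the compact part).} Now $W_1$ has finite support on the HK($\beta$) graph $(V,\nu)$. Lemma \ref{compactstep2} gives a profile $g\asymp1$ of $\Delta_\nu+W_1$. By the computation in Lemma \ref{iter}, $gh$ is a profile for $\Delta_\mu+W$, and $gh\asymp 1$.

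\textbf{Main obstacle.} The delicate point is transience of $(V,\nu)$, which is needed to apply Lemma \ref{compactstep2}. I would get it by noting $G_\nu=G^{W_2}_\mu/(h\otimes h)\asymp G^{W_2}_\mu$. This is finite because subcriticality of $\Delta_\mu+W_2$ follows from the Neumann-series resolvent bound $G_\mu^{W_2}\preceq G_\mu$, valid for $\norm{W_2}_\mu$ small. One also needs to check that the graph potential conditions hold after each transform, which is what Lemma \ref{graphpotentials} supplies.
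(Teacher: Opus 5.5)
Your proposal is correct and is essentially the paper's proof with the roles of $W_1$ and $W_2$ swapped. You remove the small Kato tail with Lemma \ref{smalls}, transfer HK($\beta$) and subcriticality to $\nu$ via Theorem \ref{bassbarlowthm} and Lemma \ref{itergreen}, apply Lemma \ref{compactstep2} to the finitely supported part, and multiply the two profiles.

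Your explicit check that $(V,\nu)$ is transient fills in a step the paper leaves implicit. Note that the Neumann series really gives finiteness, $G^{W_2}_\mu(x,y)\le G_\mu(y,y)/(1-\norm{W_2}_\mu)$, rather than the pointwise bound $G^{W_2}_\mu\preceq G_\mu$ (which only follows afterwards, from HK($\beta$) for $\nu$). Finiteness is all you need.
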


\noindent\textbf{Proof:} Since $W\in K^\infty(V, \mu)$, we can write $W=W_1+W_2$ where $W_2$ has finite support disjoint from that of $W_1$ and $$\sup_{x\in V}\sum_{y\in V} G_\mu(x,y)\abs{W_1(y)}\mu(y)<\frac{1}{2}.$$ We can also take $W_1$ to be a graph potential. By Lemma \ref{smalls}, there exists a profile $g>0$ for $\Delta_\mu+W_1$ with $g\asymp 1$. Let $\nu=g^2 \mu$. Then $(V, \nu)$ satisfies HK($\beta$), and by Lemma \ref{itergreen} $\Delta_\nu+W_2$ is subcritical.\\

Since $W_2$ has finite support, by Lemma \ref{compactstep2}, $\Delta_\nu+W_2$ has a profile $f>0$ with $f\asymp 1$. Therefore $h=gf$ is a profile for $\Delta_\mu+W$ satisfying $h\asymp 1$.\qed

\section{Continuous Time Random Walks}

In this section we discuss random walks on graphs with continuous time. See Barlow \cite{barlow1} for an in-depth treatment of the topic. Continuous time random walks offer an alternative proof of Theorem \ref{maintheorem} via results of Chen \cite{chen1}. The rest of the paper does not depend on results from this section.\\

Let $(X_n)_{n\geq 0}$ denote the discrete time random walk on the weighted graph $(V, \mu)$. Let $(N_t)_{t\geq 0}$ be an independent Poisson process with rate 1. We define the continuous time random walk for $t\geq 0$ by $$Y_t=X_{N_t}.$$ This means that the continuous random walk stays at a vertex $x$ for time $\sim \exp(1)$, then jumps to a neighbor with the same distribution as the discrete time random walk.\\

The process $(Y_t)_{t\geq 0}$ is a strong Markov process with generator $\Delta_\mu$ that is right continuous and has left limits. In particular, the results of Chen \cite{chen1} apply.\\

The Schr\"odinger operator $\Delta_\mu+W$ has a continuous time heat semigroup denoted $Q_{t, \mu}^{W}$ for $t>0$. In the case $W=0$ we denote the continuous time heat semigroup of $\Delta_\mu$ by $Q_{t, \mu}$ for $t>0$. These semigroups have corresponding heat kernels $q_\mu^W(x,y,t)$ and $q_\mu(x,y,t)$.\\

If $h>0$ is a profile of $\Delta_\mu+W$, we can define the $h$-transformed semigroup $Q_{t,\nu}$ of $Q_{t, \mu}^W$ by $$Q_{t, \nu}[f]=\frac{1}{h}Q_{t, \mu}^W[hf].$$ Then we have that $$q_\nu(x,y,t)=\frac{Q_{t,\nu}[1_{\set{x}}]}{\nu(y)}=\frac{\frac{1}{h(x)}Q_{t, \mu}^W[h1_{\set{x}}]}{h(y)^2\mu(y)}=\frac{q_\mu^W(x,y,t)h(y)\mu(y)}{h(x)h(y)^2\mu(y)}=\frac{q_\mu^W(x,y,t)}{h(x)h(y)}.$$

\begin{lemma}
Assume that the Schr\"odinger operator $\Delta_\mu+W$ is subcritical with respect to the discrete time random walk. Then $\Delta_\mu+W$ is subcritical with respect to the continuous time random walk.
\end{lemma}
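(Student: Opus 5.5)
The plan is to express the continuous time kernel as a Poisson average of the discrete time kernels and then integrate in $t$. The discrete heat kernel $p_\mu^W(x,y,n)$ is built from the kernel $K^W_\mu$, and $\Delta_\mu+W=I-K_\mu^W$. So the continuous time semigroup generated by $\Delta_\mu+W$ is
$$Q_{t,\mu}^W=e^{-t(I-K_\mu^W)}=e^{-t}\sum_{n=0}^\infty \frac{t^n}{n!}(K_\mu^W)^n.$$
In kernel form this reads
$$q_\mu^W(x,y,t)=\sum_{n=0}^\infty e^{-t}\frac{t^n}{n!}\,p_\mu^W(x,y,n).$$
Probabilistically, $(Y_t)$ jumps according to the discrete chain at the times of the independent Poisson process $N_t$, and the killing is applied in the same way at those times.

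The first step is to justify this identity. I would avoid any operator-norm convergence issue. For $W\geq 0$ all terms are nonnegative, and the series converges since $p_\mu^W(x,y,n)\leq p_\mu(x,y,n)\leq 1/\mu(y)$. For a general graph potential, $K_\mu^W$ has nonnegative entries because $K_\mu^W(x,x)\geq\epsilon>0$, but its row sums $1-W(x)$ may exceed $1$. In that case I would take the displayed series as the definition of the continuous time heat kernel. This is consistent with the Feynman--Kac description of the killed or weighted process and with the $h$-transform formula $q_\nu=q_\mu^W/(h(x)h(y))$ stated above. The series converges absolutely whenever $p^W_\mu(x,y,n)$ grows at most exponentially, and this holds because $\sup_x(1-W(x))<\infty$ when $W$ is bounded below. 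I expect this well-definedness and identification step to be the only real obstacle. Once the kernel is written as a nonnegative series, the remaining steps follow from Tonelli.

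The second step is to integrate over $t\in(0,\infty)$ and exchange the sum and the integral by Tonelli, which applies because all terms are nonnegative. Using
$$\int_0^\infty e^{-t}\frac{t^n}{n!}\,dt=1,$$
we obtain
$$\int_0^\infty q_\mu^W(x,y,t)\,dt=\sum_{n=0}^\infty p_\mu^W(x,y,n)=G_\mu^W(x,y)<+\infty.$$
The finiteness is exactly the discrete subcriticality assumption. Hence the continuous time Green's function exists and equals the discrete one, which is subcriticality for the continuous time walk. The same computation gives the converse, so the two notions are in fact equivalent.
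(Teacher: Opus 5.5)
Your proof is correct. It ends the same way as the paper's: both write $q^W_\mu(x,y,t)=\sum_{n\ge 0} e^{-t}\frac{t^n}{n!}\,p^W_\mu(x,y,n)$ and integrate in $t$ using $\int_0^\infty e^{-t}t^n/n!\,dt=1$. The difference is how the Poisson representation is justified.

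The paper takes a profile $h$, which exists because $\Delta_\mu+W$ is subcritical. It passes to the genuinely Markovian graph $(V,\nu)$ with $\nu=h^2\mu$ and quotes the standard Poissonization $q_\nu=\sum_n e^{-t}\frac{t^n}{n!}p_\nu$ from Barlow. It then transfers back using $p_\nu=p^W_\mu/(h(x)h(y))$ and $q_\nu=q^W_\mu/(h(x)h(y))$. This sidesteps the fact that $K^W_\mu$ need not be sub-Markovian, at the cost of needing the profile and identifying the $h$-transformed continuous semigroup with that of $\Delta_\nu$.

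You expand $e^{-t(I-K^W_\mu)}$ directly instead. That is more elementary, needs no profile, and (for bounded $W$) gives the identity outright, hence both directions of the equivalence at once.

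The step you flag as the main obstacle is easy, and you do not need to take the series as a definition. Since all entries of $K^W_\mu$ are nonnegative, $p^W_\mu(x,x,n)\ge K^W_\mu(x,x)^n/\mu(x)$. So subcriticality forces $K^W_\mu(x,x)<1$, i.e. $W(x)>K_\mu(x,x)-1\ge -1$; you do not have to assume $W$ is bounded below. Together with the graph-potential bound $W\le 1-\epsilon$, this makes $W$ bounded. Hence $K^W_\mu$ is a bounded operator, with norm $\sup_x(1-W(x))<2$ on $\ell^\infty$. The exponential series therefore converges in operator norm to $e^{-t(\Delta_\mu+W)}$, which is also the Feynman--Kac semigroup, since both have generator $K_\mu-I-W$.

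Even convergence alone comes for free. The map $t\mapsto\sum_n \frac{t^n}{n!}p^W_\mu(x,y,n)$ is nondecreasing, and by Tonelli its integral against $e^{-t}$ equals $\sum_n p^W_\mu(x,y,n)<\infty$. So it is finite for every $t$.
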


\noindent\textbf{Proof:} Let $h>0$ be a profile for $\Delta_\mu+W$ and let $\nu=h^2\mu$. From Barlow \cite{barlow1}, $$q_\nu(x,y,t)=\sum_{n=0}^\infty \frac{e^{-t}t^n}{n!}p_\nu(x,y,n).$$ It follows from Lemma \ref{classicalh} that $$q_\mu^W(x,y,t)=\sum_{n=0}^\infty \frac{e^{-t}t^n}{n!}p_\mu^W(x,y,n).$$ We therefore have that $$\int_0^\infty q_\mu^W(x,y,t)dt=\sum_{n=0}^\infty p_\mu^W(x,y,n),$$ and the result follows.\qed\\

By Theorem 3.6 of Chen \cite{chen1}, when $\Delta_\mu+W$ is subcritical, then it is gaugeable, i.e. $$\sup_{x\in V} E^x\left[\exp\left(-\int_0^\infty W(Y_s)ds\right)\right]<+\infty.$$

We can now define the Feynman-Kac gauge by $$h(x)=E^x\left[\exp\left(-\int_0^\infty W(Y_s)ds\right)\right].$$

\begin{lemma}
Let $(V, \mu)$ be a transient weighted graph and assume that $W$ is Green-bounded on $(V, \mu)$. Assume that the Schr\"odinger operator $\Delta_\mu+W$ is subcritical on $(V, \mu)$. Then the Feynman-Kac gauge $h$ is a profile for $\Delta_\mu+W$.
\end{lemma}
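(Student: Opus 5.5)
We must show that the gauge $h(x)=E^x[\exp(-\int_0^\infty W(Y_s)\,ds)]$ is strictly positive, finite, and satisfies $(\Delta_\mu+W)h=0$ on $V$. Finiteness is exactly gaugeability (Theorem 3.6 of Chen \cite{chen1}), so $\sup_V h<+\infty$. Positivity is immediate, because the integrand $\exp(-\int_0^\infty W(Y_s)ds)$ is strictly positive almost surely whenever the integral is finite a.s. Finiteness of the integral should follow from transience together with Green-boundedness: $E^x\int_0^\infty |W(Y_s)|\,ds=\sum_y G_\mu(x,y)|W(y)|\mu(y)\le \norm{W}_\mu<\infty$, using that the continuous-time Green function coincides with the discrete one, as in the previous lemma. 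So it remains to prove the equation.

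\textbf{Step 1: first-jump decomposition.} Condition on the first jump of $Y$ by the strong Markov property. Started at $x$, the walk holds at $x$ for an $\exp(1)$ time $\tau$, which is the first Poisson arrival. It then moves to $y$ with probability $K_\mu(x,y)$; this includes $y=x$, a lazy self-loop. During $[0,\tau)$ the accumulated potential is $W(x)\tau$. Hence
$$h(x)=E\big[e^{-W(x)\tau}\big]\sum_{y}K_\mu(x,y)h(y).$$
Here $E[e^{-W(x)\tau}]=\frac{1}{1+W(x)}$, which is finite since $W(x)>-1$. This holds because $W(x)>K_\mu(x,x)-1$ is forced when a gauge exists. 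More directly, if $W(x)\le -1$ then $E[e^{-W(x)\tau}]=\infty$, contradicting finiteness of $h(x)$ (note $h(y)>0$ on the right side). Rearranging gives $(1+W(x))h(x)=K_\mu h(x)$, i.e. $(\Delta_\mu+W)h(x)=0$.

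\textbf{Step 2: justification.} Independence of $\tau$ and the post-jump path gives the factorization. Let $\theta_\tau$ denote the shift operator at time $\tau$; the shifted functional $\exp(-\int_0^\infty W(Y_{\tau+s})ds)$ has conditional expectation $h(Y_\tau)$. Tonelli applies because everything is nonnegative, so no integrability issue arises beyond finiteness of $h$.

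\textbf{Main obstacle.} The delicate point is making sure the integral $\int_0^\infty W(Y_s)ds$ converges, so that $h$ is well defined pathwise, and that the strong Markov factorization is legitimate when $W$ takes both signs. Green-boundedness supplies absolute convergence a.s., as shown above. Gaugeability supplies the uniform bound on $h$. Once both are in hand, the computation is routine.
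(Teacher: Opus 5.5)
Your proposal is correct and follows essentially the same route as the paper. Both arguments use Green-boundedness through $E^x\int_0^\infty |W(Y_s)|\,ds=\sum_y G_\mu(x,y)|W(y)|\mu(y)$ and then a first-jump decomposition at the first Poisson time $\tau$, relying on independence of $\tau$ and the post-jump path to get $h=\frac{1}{1+W}K_\mu h$. The differences are minor: the paper gets positivity from Jensen's inequality, which gives the uniform bound $h\ge e^{-\norm{W}_\mu}$ instead of mere pointwise positivity, and you make explicit two points the paper leaves implicit, namely finiteness of $h$ from gaugeability and the resulting requirement $W(x)>-1$ for $E[e^{-W(x)\tau}]$ to be finite.
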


\noindent\textbf{Proof:} Note that for $x\in V$, we have $$\sum_{y\in V}G_\mu(x,y)\abs{W(y)}\mu(y)=\sum_{y\in V}\left(\int_0^\infty q_\mu(x,y,t)dt\right) \abs{W(y)}\mu(y)$$ $$=\int_0^\infty \sum_{y\in V} q_\mu(x,y,t)\abs{W(y)}\mu(y)dt= \int_0^\infty E^x\left[ \abs{W(Y_t)}\right]dt=E^x\left[\int_0^\infty \abs{W(Y_s)}ds\right].$$ Therefore $\sup_{x\in V} E^x\left[\int_0^\infty \abs{W(Y_s)}ds\right]=C<+\infty$. By Jensen's inequality, we have that $$h(x)=E^x\left[\exp\left(-\int_0^\infty W(Y_s)ds\right)\right]\geq \exp\left(-E^x\left[\int_0^\infty W(Y_s)ds\right]\right)\geq e^{-C}>0.$$ Thus $h$ is strictly positive. Furthermore, if $x\in V$ and $\tau\sim \exp(1)$ denotes the first jumping time $\tau=\inf\set{t\colon N_t>0}$, then by the Markov property, $$h(x)=E^x\left[\exp\left(-\int_0^\infty W(Y_s)ds\right)\right]$$ $$=E^x\left[\exp\left(-\int_0^\tau W(Y_s)ds\right)E^{Y_\tau}\left[\exp\left(-\int_0^\infty W(Y_s)ds\right)\right]\right]$$ $$=E^x\left[\exp\left(-\int_0^\tau W(Y_s)ds\right)h(Y_\tau)\right]=E^x\left[e^{-\tau W(x)}h(Y_\tau)\right].$$ Since $e^{-\tau W(x)}$ and $h(Y_\tau)$ are independent, we have that $$h(x)=E^x\left[e^{-\tau W(x)}h(Y_\tau)\right]=E^x\left[e^{-\tau W(x)}\right]E^x\left[h(Y_\tau)\right]=\frac{1}{1+W(x)}(K_\mu h)(x).$$ We therefore conclude that $$h+Wh=K_\mu h$$ or $$(\Delta_\mu+W)h=0.$$ Thus $h$ is a profile for $\Delta_\mu+W$.\qed\\

We now present a second proof of Theorem \ref{maintheorem}.\\

\noindent\textbf{Second Proof of Theorem \ref{maintheorem}:} Let $(V, \mu)$ be a transient weighted graph and let $W\in K^\infty(V, \mu)$ be such that $\Delta_\mu+W$ is subcritical. By Theorem 3.6 of Chen \cite{chen1}, $W$ is gaugeable, meaning that $$h(x)=E^x\left[\exp\left(-\int_0^\infty W(Y_s)ds\right)\right]$$ provides a profile for $\Delta_\mu+W$ such that $h\asymp 1$.\qed

\section{Heat Kernel Estimates: Proof of Theorem \ref{heatkerneltheorem}}

We now apply our results to obtain heat kernel estimates for Schr\"odinger operators, and in particular prove our main result Theorem \ref{heatkerneltheorem}.\\

\noindent\textbf{Proof of Theorem \ref{heatkerneltheorem}:} By Theorem \ref{maintheorem}, there exists a profile $h>0$ for $\Delta_\mu+W$ satisfying $h\asymp 1$. Let $\nu=h^2\mu$.\\

By Theorem \ref{bassbarlowthm}, the weighted graph $(V, \nu)$ satisfies HK($\beta$). The conclusion then follows from Lemma \ref{classicalh}.\qed

\subsection{Examples}

We now turn to examples of graphs and potentials satisfying the hypothesis of Theorem \ref{heatkerneltheorem}.

\begin{example}
    Lattice $\Z^d$
\end{example}

\noindent Here the edge relation is given by $\mu_{xy}=1$ if and only if $\norm{x-y}_2\leq 1$. It is well known that $(\Z^d, \mu)$ satisfies HK($d$, 2). $(\Z^d, \mu)$ is recurrent for $d=1,2$ and transient for $d\geq 3$.\\

By the results of Delmotte \cite{delmotte1}, if $\nu$ is any edge weight function such that $\nu\asymp \mu$, with $\mu$ as above, then $(\Z^d, \nu)$ satisfies HK($d$, 2). For $d\geq 3$ and any $\epsilon>0$, if $\fct{W}{\Z^d}{\Real}$ is a graph potential satisfying $\abs{W(x)}\leq (1+\abs{x})^{-(2+\epsilon)}$ and $\Delta_\nu+W$ is subcritical, then Theorem \ref{heatkerneltheorem} applies and the heat kernel $p_\nu^W(x,y,n)$ satisfies two-sided Gaussian-type estimates of the form HK($d$, 2).

\begin{example}
Discrete Heisenberg Group
\end{example}

The discrete Heisenberg group $H(\Z)$ consists of $3\times 3$ integer matrices $$\begin{pmatrix}
1 & x & y\\
0 & 1 & z\\
0 & 0& 1
\end{pmatrix},\hspace{2mm} x,y,z\in \Z.$$

\begin{figure}[hbt!]
\begin{centering}
\includegraphics[scale=0.65]{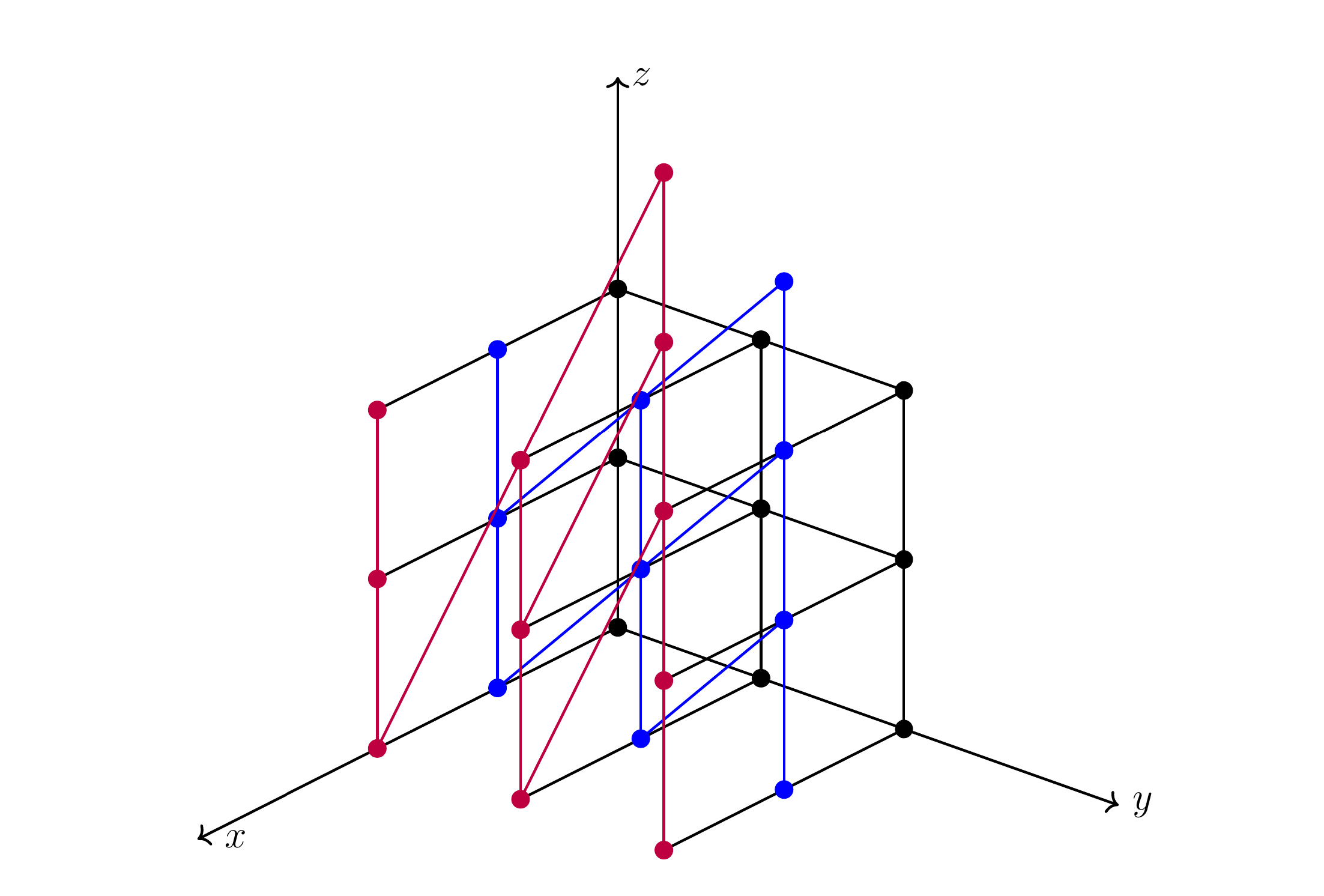}
\caption{A finite subgraph of the Cayley graph of the Heisenberg group $H(\Z)$. The colors are only to maintain visual clarity.}
\end{centering}
\end{figure}

The Heisenberg group is an example of a nonabelian nilpotent group. It has generators $$a=\begin{pmatrix}
1 & 1 & 0\\
0 & 1 & 0\\
0 & 0& 1
\end{pmatrix}\textrm{ and }b=\begin{pmatrix}
1 & 0 & 0\\
0 & 1 & 1\\
0 & 0& 1
\end{pmatrix}.$$ We add $$c=aba^{-1}b^{-1}=\begin{pmatrix}
1 & 0 & 1\\
0 & 1 & 0\\
0 & 0& 1
\end{pmatrix}$$ to the set of generators and consider the Cayley graph of $H(\Z)$ with generating set $S=\set{a, a^{-1}, b, b^{-1}, c, c^{-1}}$. Specifically, we set $V=H(\Z)$ and define $f\sim g$ when there exists $s\in S$ with $f=gs$. We also define $\mu_{fg}=1$ if and only if $f\simeq g$, and $\mu_{fg}=0$ otherwise. \\


The weighted graph $(H(\Z), \mu)$ satisfies HK($2$) as a consequence of the results of Hebisch and Saloff-Coste \cite{hebischsaloffcoste1}. It is also well known that $V_\mu(x,r)\asymp r^4$ which is uniform in $x\in H(\Z)$ and $r\geq 1$. Therefore $(H(\Z), \mu)$ satisfies HK$(4, 2)$. Therefore, with $o=$ the identity matrix, for any graph potential $\fct{W}{H(\Z)}{\Real}$ with $\abs{W(x)}\preceq \brac{x}^{-(2+\epsilon)}$ and such that $\Delta_\mu+W$ is subcritical, the heat kernel $p_\mu^W(x,y,n)$ satisfies two-sided Gaussian type estimates of the form of HK(4,2).\\

See also Hebisch \cite{hebisch1} for closely related results on heat kernels of Lie groups.

\begin{example}
Graphical Sierpinski Carpets
\end{example}

The following presentation is the same as in Barlow and Bass \cite{barlowbass2}. A graphical Sierpinski carpet is the graph analog of a Sierpinski carpet fractal and can be defined as follows. Let $H_0=\Z_+^d$ and for $x=(n_1,\dots,n_d)\in H_0$ write the ternary expansions $n_i=\sum_{r\geq 0} n_{ir}3^r$, where $n_{ir}\in \set{0,1,2}$. Define $$J_k=\set{(n_1, \dots, n_d)\colon n_{ik}=1 \textrm{ for } 1\leq i\leq d}.$$ This is union of disjoint cubes in $\Z^d$ of side length $3^k$. Let $$H_n=H_0\setminus \bigcup_{k=1}^n J_k$$ $$V^{(d)}=\bigcap_{n=0}^\infty H_n.$$ For $x,y\in V^{(d)}$ define $\mu_{xy}=1$ if and only if $\norm{x-y}_2\leq 1$ and $\mu_{xy}=0$ otherwise. This yields a weighted graph $(V^{(d)}, \mu)$.\\

\begin{figure}[hbt!]
\begin{centering}
\includegraphics[scale=0.3]{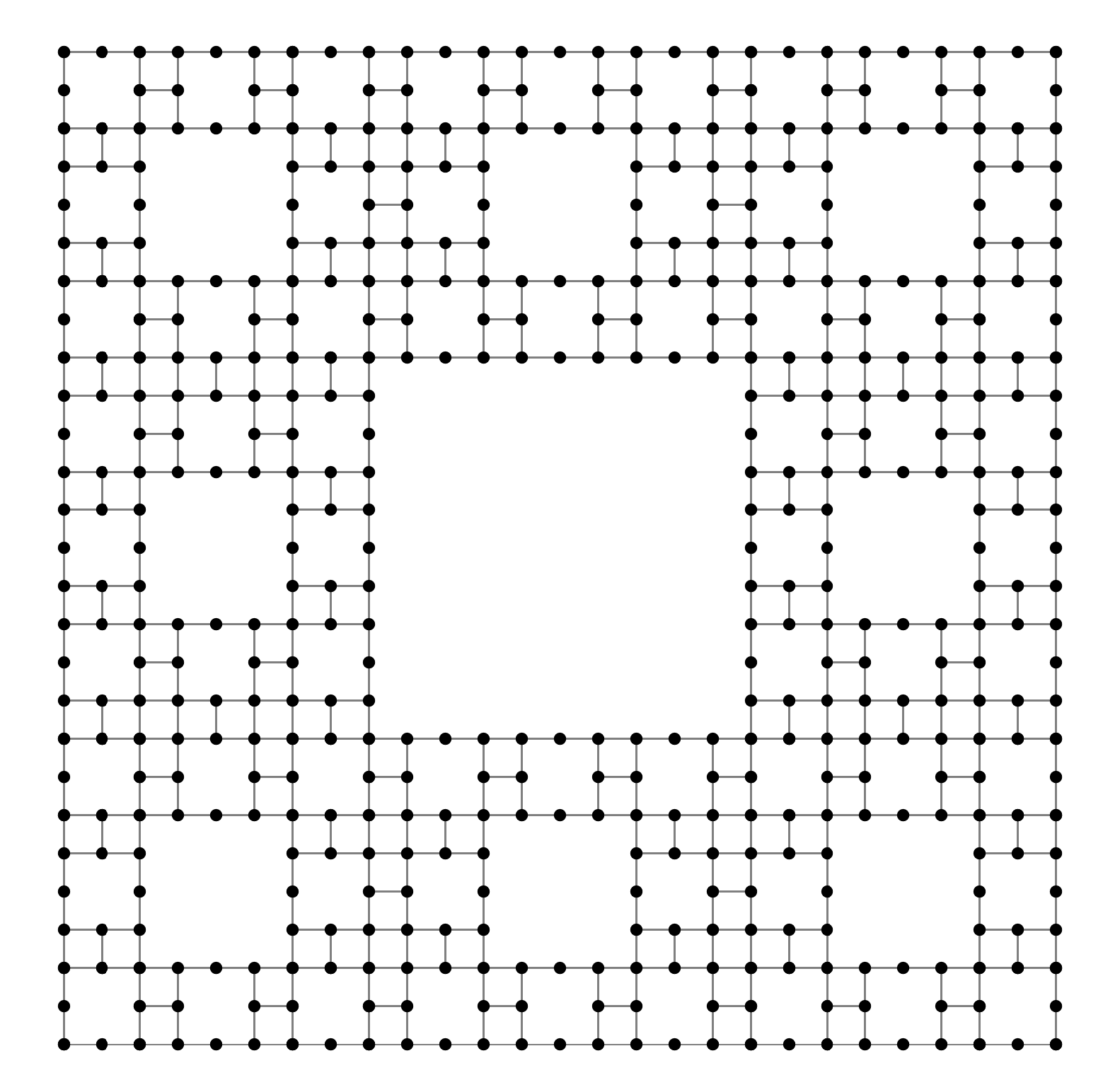}
\caption{A finite subgraph of a planar graphical Sierpinski carpet $V^{(2)}$.}
\end{centering}
\end{figure}

 See Figure 2 for a graphical Sierpinski carpet in the plane. Note that the pictured graph $V^{(2)}$ is not transient. We have chosen to include it over higher dimensional graphs for visual simplicity.\\

By Theorem 1.5 in Barlow and Bass \cite{barlowbass2}, for each $d\geq 2$ there exists $\beta(d)>2$ such that $(V^{(d)}, \mu)$ satisfies HK$(\beta(d))$. Furthermore, $V_\mu(x, r)\asymp r^{\alpha(d)}$ where $\alpha(d)=\log(3^d-1)/\log 3$, and therefore $(V^{(d)}, \mu)$ satisfies HK($\alpha(d), \beta(d)$). Note that $\alpha(d)$ is the Hausdorff dimension of the Sierpinski carpet fractal. It is known that $(V^{(d)}, \mu)$ is transient for $d\geq 3$.\\

Therefore, by Lemma \ref{katodecay} and Theorem \ref{heatkerneltheorem}, for $d\geq 3$, if $\fct{W}{V^{(d)}}{\Real}$ is a graph potential such that $\Delta_\mu+W$ is subcritical with $\abs{W(x)}\preceq \brac{x}^{-(\beta(d)+\epsilon)}$ for some $\epsilon>0$, then the heat kernel of $\Delta_\mu+W$ has two-sided sub-Gaussian estimates of the same form as $\Delta_\mu$.\\

One can also consider more general types of graphical Sierpinski carpets with similar results. See Barlow and Bass \cite{barlowbass2} for more details.

\begin{example}
Graphs satisfying HK($\alpha, \beta$) for prescribed $\alpha>\beta\geq 2$
\end{example}

In \cite{barlow2}, Barlow demonstrated that if $\alpha>\beta\geq 2$, then there exists a weighted graph $(V, \mu)$ satisfying HK($\alpha, \beta$). It then follows that Theorem \ref{heatkerneltheorem} yields estimates for the heat kernel of $\Delta_\mu+W$, provided this operator is subcritical and $W$ is a graph potential with $\abs{W(x)}\preceq \brac{x}^{-(\beta+\epsilon)}$ for some $\epsilon>0$.\\

\bibliographystyle{plain}

\bibliography{SubcriticalGraphBib.bib}

@book{aizenmanwarzel1,
  title={Random Operators},
  author={Aizenman, M. and Warzel, S.},
  publisher={Graduate Studies in Mathematics},
  year={2015}
}

@article{ancona1,
  title={{Green's functions, generalized first eigenvalues and perturbations of diffusions or Markov chains}},
  author={Ancona, A.},
  journal={Symposia Mathematica},
  volume={39},
  pages={1-25},
  year={1999}
}

@book{barlow1,
  title={Random Walks and Heat Kernels on Graphs},
  author={Barlow, M.},
  publisher={London Mathematical Society Lecture Note Series},
  year={2017}
}

@article{barlow2,
  title={Which values of the volume growth
and escape time exponent
are possible for a graph?},
  author={Barlow, M.},
  journal={Rev. Mat. Iberoamericana},
  volume={20},
  pages={1-31},
  year={2004}
}

@article{barlow3,
  title={{Analysis on the Sierpinski carpet}},
  author={Barlow, M.},
  journal={CRM Proceedings and Lecture Notes},
  volume={56},
  year={2013}
}

@article{barlowbass1,
  title={{Stability of Parabolic Harnack Inequalities}},
  author={Barlow, M. and Bass, R.},
  journal={Trans. Amer. Math. Soc.},
  volume={356(4)},
  pages={1501-1533},
  year={2003}
}

@article{barlowbass2,
  title={{Random walks on graphical Sierpinski carpets}},
  author={Barlow, M. and Bass, R.},
  journal={Random Walks and Discrete Potential Theory},
  volume={39},
  pages={26-55},
  year={1999}
}

@article{barlowbass3,
  title={{Uniqueness of Brownian motion on Sierpinski carpets}},
  author={Barlow, M. and Bass, R. and Kumagai, T. and Teplyaev, A.},
  journal={Journal Europ. Math. Soc.},
  volume={12},
  pages={655-701},
  year={2010}
}

@article{chen1,
  title={{Gaugeability and Conditional Gaugeability}},
  author={Chen, Z.-Q.},
  journal={Transactions of the American Mathematical Society},
  volume={354 (11)},
  pages={4639-4679},
  year={2002}
}

@article{delmotte1,
  title={{Parabolic Harnack inequality and estimates of Markov chains on graphs}},
  author={Delmotte, T.},
  journal={Rev. Math. Iberoamericana},
  volume={15},
  pages={181-232},
  year={1999}
}

@article{gravesmccleary1,
  title={{Heat Kernel Estimates for Schr\"odinger Operators with Decay at Infinity on Parabolic Manifolds}},
  author={Graves-McCleary, A. and Saloff-Coste, L.},
  journal={Trans. Amer. Math. Soc.},
  volume={Published electronically},
  pages={1-29},
  year={2026},
  doi={https://doi.org/10.1090/tran/9732}
}

@article{gravesmccleary2,
  title={{The boundary Harnack principle and the $3G$ principle in fractal-type spaces}},
  author={Graves-McCleary, A. and Saloff-Coste, L.},
  journal={Mathematische Nachrichten},
  volume={298(11)},
  pages={3554-3575},
  year={2025}
}

@book{grigoryan1,
  title={Introduction to Analysis on Graphs},
  author={Grigor'yan, A.},
  publisher={American Mathematical Society University Lecture Series},
  year={2018}
}

@article{grigoryan2,
author = {Grigor'yan, A.},
year = {2006},
month = {01},
pages = {93-191},
title = {Heat kernels on weighted manifolds and applications},
volume = {398},
isbn = {9780821836989},
journal = {Contemporary Mathematics},
doi = {10.1090/conm/398/07486}
}

@article{grigoryanhu,
  title={{Heat kernels and Green functions on metric measure spaces}},
  author={Grigor'yan, A. and Hu, J.},
  journal={Canad. J. Math.},
  volume={66(3)},
  pages={641-699},
  year={2014}
}

@article{grigoryantelcs1,
  title={{Sub-Gaussian estimates of heat kernels on infinite graphs}},
  author={Grigor'yan, A. and Telcs, A.},
  journal={Duke Math J.},
  volume={109 no. 3},
  pages={451-510},
  year={2001}
}

@article{hebischsaloffcoste1,
  title={{Gaussian estimates for Markov chains and random walks on groups}},
  author={Hebisch, W. and Saloff-Coste, L.},
  journal={Annals of Probability},
  volume={21 no. 2},
  pages={673-709},
  year={1993}
}

@article{hebisch1,
  title={{On heat kernels on Lie groups}},
  author={Hebisch, W.},
  journal={Math. Z.},
  volume={210},
  pages={593-605},
  year={1992}
}

@book{kigami1,
  title={Analysis on Fractals},
  author={Kigami, J.},
  publisher={Cambridge Tracts University Press},
  year={2001}
}

@article{kusuokazhou1,
  title={{Dirichlet forms on fractals: Poincar\'e constant and resistance}},
  author={Kusuoka, S. and Zhou, X. Y.},
  journal={Probab. Theory Relat. Fields},
  volume={93},
  pages={169-196},
  year={1992}
}

@book {lawlerlimic1,
    AUTHOR = {Lawler, Gregory F. and Limic, Vlada},
     TITLE = {Random walk: a modern introduction},
    SERIES = {Cambridge Studies in Advanced Mathematics},
    VOLUME = {123},
 PUBLISHER = {Cambridge University Press, Cambridge},
      YEAR = {2010},
     PAGES = {xii+364},
      ISBN = {978-0-521-51918-2},
   MRCLASS = {60G50 (60-02)},
  MRNUMBER = {2677157},
MRREVIEWER = {Andrew\ R.\ Wade},
       DOI = {10.1017/CBO9780511750854},
       URL = {https://doi.org/10.1017/CBO9780511750854},
}

@book{spitzer1,
  title={Principles of random walk},
  author={Spitzer, Frank},
  volume={34},
  year={2013},
  publisher={Springer Science \& Business Media}
}

@article{takeda1,
  title={{Gaussian Bounds of Heat Kernels for Schr\"odinger Operators on Riemannian Manifolds}},
  author={Takeda, M.},
  journal={Bull. London Math. Soc.},
  volume={39},
  pages={85-94},
  year={2007}
}

@book{woess1,
  title={Random Walks on Infinite Graphs and Groups},
  author={Woess, W.},
  publisher={Cambridge University Press},
  year={2009}
}

\end{document}